\newcommand{\cut}[1]{{}}
\newcommand{\xu}[1]{{\color{black}#1}}
\newcommand{\vh}{{\mathbf{h}}}
\newcommand{\vr}{{\mathbf{r}}}
\newcommand{\vy}{{\mathbf{y}}}
\newcommand{\vA}{{\mathbf{A}}}
\newcommand{\vD}{{\mathbf{D}}}
\newcommand{\vG}{{\mathbf{G}}}
\newcommand{\vH}{{\mathbf{H}}}
\newcommand{\vI}{{\mathbf{I}}}
\newcommand{\vM}{{\mathbf{M}}}
\newcommand{\vP}{{\mathbf{P}}}
\newcommand{\vV}{{\mathbf{V}}}
\newcommand{\vW}{{\mathbf{W}}}
\newcommand{\vX}{{\mathbf{X}}}
\newcommand{\vZ}{{\mathbf{Z}}}
\newcommand{\cE}{{\mathcal{E}}}
\newcommand{\cG}{{\mathcal{G}}}
\newcommand{\cL}{{\mathcal{L}}}
\newcommand{\cN}{{\mathcal{N}}}
\newcommand{\cU}{{\mathcal{U}}}
\newcommand{\cV}{{\mathcal{V}}}
\newcommand{\EE}{{\mathbb{E}}}
\newcommand{\bm}[1]{\boldsymbol{#1}}
\newcommand{\RR}{\mathbb{R}}
\newcommand{\vone}{{\mathbf{1}}}
\newcommand{\bc}{\begin{center}}
\newcommand{\ec}{\end{center}}
\newcommand{\bdm}{\begin{displaymath}}
\newcommand{\edm}{\end{displaymath}}
\newcommand{\beq}{\begin{equation}}
\newcommand{\eeq}{\end{equation}}
\newcommand{\bfl}{\begin{flushleft}}
\newcommand{\efl}{\end{flushleft}}
\newcommand{\et}{\end{tabbing}}
\newcommand{\beqn}{\begin{align}}
\newcommand{\eeqn}{\end{align}}
\newcommand{\beqs}{\begin{align*}} 
\newcommand{\eeqs}{\end{align*}}  
\newcommand{\svi}{\sqrt{\vV_{i}}}
\newcommand{\svip}{\sqrt{\vV_{i-1}}}
\newcommand{\sv}{\sqrt{\vV_{t}}}
\newcommand{\svp}{\sqrt{\vV_{t-1}}}
\newcommand{\mt}{\vM_{t}}
\newcommand{\mtp}{\vM_{t-1}}
\newcommand{\mip}{\vM_{i-1}}
\newcommand{\gt}{\vG_{t}}
\newcommand{\at}{\alpha}
\newcommand{\atp}{\alpha}
\newcommand{\wt}{\vW_{t}}
\newcommand{\wtp}{\vW_{t-1}}
\newcommand{\wtn}{\vW_{t+1}}
\newcommand{\xt}{\widetilde{\vW}_t}
\newcommand{\xtn}{\widetilde{\vW}_{t+1}}
\newcommand{\gi}{\vG_{i}}
\newcommand{\ai}{\alpha}
\newcommand{\aip}{\alpha}
\newcommand{\xix}{\widetilde{\vW}_i}
\newcommand{\xin}{\widetilde{\vW}_{i+1}}
\newcommand{\sumi}{\sum_{i=1}^t}
\newcommand{\gfx}{\nabla F(\widetilde{\vW}_i)}
\newcommand{\bfracConst}{\frac{\beta_1}{1-\beta_1}}
\theoremstyle{thmstyleone}%
\newtheorem{theorem}{Theorem}
\newtheorem{assumption}{Assumption}
\newtheorem{lemma}{Lemma}
\theoremstyle{thmstyletwo}%
\newtheorem{remark}{Remark}%
\theoremstyle{thmstylethree}%
\begin{document}

\title[Article Title]{Neighbor-Sampling Based Momentum Stochastic Methods for Training Graph Neural Networks}


\author[1]{\fnm{Molly} \sur{Noel}}\email{noelm@rpi.edu}

\author[1]{\fnm{Gabriel} \sur{Mancino-Ball}}\email{gabriel.mancino.ball@gmail.com}

\author*[1]{\fnm{Yangyang} \sur{Xu}}\email{xuy21@rpi.edu}

\affil[1]{\orgdiv{Mathematical Sciences}, \orgname{Rensselaer Polytechnic Institute}, \orgaddress{\street{110 Eighth Street}, \city{Troy}, \postcode{12180}, \state{New York}, \country{United States}}}




\abstract{
 Graph convolutional networks (GCNs) are a powerful tool for graph representation learning. Due to the recursive neighborhood aggregations employed by GCNs, efficient training methods suffer from a lack of theoretical guarantees or are missing important practical elements from modern deep learning algorithms, such as adaptivity and momentum. In this paper, we present several neighbor-sampling (NS) based Adam-type stochastic methods for solving a nonconvex GCN training problem. We utilize the control variate technique proposed by \cite{VRGCN} to reduce the stochastic error caused by neighbor sampling. Under standard assumptions for Adam-type methods, we show that our methods enjoy the optimal convergence rate. In addition, we conduct extensive numerical experiments on node classification tasks with several benchmark datasets. The results demonstrate superior performance of our methods over classic NS-based SGD that also uses the control-variate technique, especially for large-scale graph datasets. Our code is available at \url{https://github.com/RPI-OPT/CV-ADAM-GNN}. 
}

\keywords{Graph Convolutional Networks, Neighbor-Sampling, Control Variate, Adaptive Methods, Stochastic Gradient Methods}

\maketitle

\section{Introduction}\label{sec1}
Graph structured data is ubiquitous in our world. To leverage the benefits of recent advancements in deep learning while exploiting available graph structure, graph representation learning has emerged as a general framework for tackling many graph-based tasks~\cite{chen2020graph}. Graph representation learning has found success in a wide range of applications including recommendation systems~\cite{fan2019graph,sharma2024survey}, weather forecasting~\cite{lam2023learning}, quantum chemistry~\cite{gilmer2017neural}, and code clone detection~\cite{liu2023learning}.

Graph Neural Networks (GNNs), first introduced by \cite{gori2005new,scarselli2008graph}, comprise a framework of graph representation learning methods that recursively aggregate local node information with that of their neighbors. Graph Convolutional Networks (GCNs) are a popular GNN first introduced by \cite{OriginalGCN} which
have sparked many advancements in graph representation learning including architecture design \cite{li2020deepergcn,wolfe2023gist} and training considerations \cite{NS_GraphSAGE,chen2018fastgcn,zeng2019graphsaint,li2021training}. This paper focuses on the training dynamics of GCNs from an optimization perspective; more precisely, we develop algorithms that incorporate the control variate estimator (CVE)~\cite{VRGCN} into several Adam-type optimizers for GCN training.

Since GCNs aggregate neighbor information recursively, classical training algorithms, such as stochastic gradient descent (SGD)~\cite{SGDrobbins1951stochastic} or its various momentum-based variants~\cite{Nesterov2014IntroductoryLO}, can be computationally expensive for large and densely connected graphs. 
By maintaining historical approximations of node features at each layer, the CVE-based SGD introduced in~\cite{VRGCN} is able to significantly decrease the amount of recursive node representation computations. A convergence guarantee to a stationary point is established by \cite{VRGCN} for the CVE-based SGD. However, it is unknown whether Adam-type methods, which promise faster empirical convergence in deep learning on regular (e.g., image/video) data \cite{OriginalAdam, xu2023parallel}, 
equipped with the CVE technique can also have guaranteed convergence. The numerical experiments in~\cite{VRGCN} demonstrate the performance of a CVE-based Adam method for training GCNs, but no theoretical performance guarantees for such method have been established. 

\subsection{Contributions}

In this work, we explore the effects of using different Adam-type optimizers together with the CVE technique for training GCNs. The optimizers that we explore incorporate momentum into the gradient and/or effective stepsize components, which are designed to accelerate convergence of stochastic gradient-type methods. We focus on four momentum-based optimizers: Adam \cite{OriginalAdam}, Heavy-Ball SGD \cite{Heavyballpolyak1964some},  AMSGrad \cite{AMSGradreddi2019convergence}, and AdaGrad \cite{duchi2011adaptive}. We not only compare their empirical performance to that of the classic SGD but also provide rigorous convergence analysis to establish optimal convergence rate results.

The work \cite{AdamAnalysis} provides convergence analysis for generalized Adam-type optimizers, which can be specified to include the four aforementioned optimizers, as well as SGD, for a nonconvex stochastic optimization problem. However, its convergence results do not apply to the CVE-based Adam-type methods because their theoretical analysis requires \emph{unbiased} gradients, but the gradients produced from the CVE-based methods are \emph{biased}. In contrast, we provide a convergence guarantee for the general CVE-based Adam-type update case without the unbiased gradient assumption, and optimal convergence rate results for AMSGrad, Heavy-Ball SGD, SGD, and AdaGrad on training GCNs. While our theorem utilizes the specialty of the CVE-based stochastic gradient of a GCN model, it directly applies to other applications with an access to biased gradients where the bias is in the order of the stepsize. We also demonstrate the performance of the five different optimizers for training GCNs on five benchmark node classification datasets.

\subsection{Notation}
Multiplication, division, and square roots of sequences of matrices are performed componentwise, with multiplication denoted by $\odot$. 
The norm $\|\cdot \|$ is the Frobenius norm unless otherwise stated. The component of a matrix $\vA$ at the $i$-th row and $j$-th column is denoted as $\vA[i,j]$. The trainable model parameter $\vW$ is in the format of a set of matrices, and $(\vW)_j$ represents the $j$-th entry of $\vW$ by viewing it in a long-vector format.

\subsection{Outline}
The rest of the paper is organized as follows. In Section~\ref{sec:gcn}, we give the description and formulation of the GCN training problem, as well as our algorithm. 
In Section~\ref{sec:relatedwork}, we review existing works about GNNs and algorithms for training GNNs. 
Convergence results of our algorithm are presented in Section 
\ref{sec:convergence}. In Section~\ref{sec:numerical}, we show experimental results to demonstrate the effectiveness of our algorithm for training GCNs on a few benchmark datasets. Conclusions are given in Section~\ref{sec:conclusion}.

\section{Formulation of GCN Training and Proposed Algorithm}\label{sec:gcn}

We define an undirected graph $\cG=(\cV,\cE)$ by a set of nodes $\cV=\{1,\ldots,n\}$ and edges $\cE\subseteq\cV\times\cV$. We define the set of neighbors of node $v$ as $\cN_v=\{u:(u,v)\in\cE\}$. The feature matrix of $\cG$ is given by $\vX\in \RR^{n\times d_0}$ such that the $v$-th row of $\vX$ holds the feature vector of node $v$. We denote the adjacency matrix of $\cG$ as $\vA\in \RR^{n\times n}$ such that $\vA[u,v]=1$ if $(u,v)\in\cE$ and 0 otherwise. We now define the GCN architecture and state our problem of interest.

We leverage the normalized adjacency matrix~\cite{OriginalGCN} $\vP= \widetilde{\vD}^{-\frac{1}{2}}(\vA+\vI)\widetilde{\vD}^{-\frac{1}{2}}\in \RR^{n \times n}$, where $\vI$ is the $n \times n$ identity matrix and $\widetilde{\vD}$ is the diagonal matrix such that $\widetilde{\vD}[u,u]=\sum_v (\vA+\vI)[u,v]$. We define the feature representation matrix at layer $k$ of a GCN to be $\vH_{\mathrm{exact}}^{(k)}\in \RR^{n\times d_k}$ such that the $v$-th row corresponds to the representation of node $v$ at layer $k$. We set $\vH_{\mathrm{exact}}^{(0)}=\vX$ and utilize the following update rules:
\begin{align}
    \Tilde{\vZ}^{(k)}&=\vP\vH^{(k-1)}_{\mathrm{exact}}\vW^{(k-1)},\;\; k=1,\ldots,K, \;\label{eq:GCN1}\\    \vH^{(k)}_{\mathrm{exact}}&=\sigma(\Tilde{\vZ}^{(k)}),\;\; k=1,\ldots,K, \label{eq:GCN2}
\end{align}
where $\vW^{(k)}\in \RR^{d_k \times d_{k+1}}$ is a trainable weight matrix and $\sigma$ is a non-linear activation function (e.g. ReLU) at layers $k=1,\dots,K-2$ and an appropriate readout function (e.g. softmax) at layer $K-1$. Updates~\eqref{eq:GCN1} and~\eqref{eq:GCN2} comprise the GCN architecture~\cite{OriginalGCN}. 

We consider the optimization problem of training a $K$-layer GCN for a node-level classification task. The final node embeddings generated by our method can also be used for the edge-classification and graph-classification tasks. Denote $\cV_{\cL}$ as the set of nodes which have a known class label and $\cV_{\cU}=\cV \backslash \cV_{\cL}$ as the set of nodes whose label we would like to predict. 
Let $\vW= \{\vW^{(0)},\vW^{(1)},\ldots,\vW^{(K-1)}\}$ be a sequence of trainable weight matrices such that $\vW^{(k)}$ is the weight matrix at layer $k$ of a GCN. 

The goal is to find weight matrices $\vW$ that minimize the following training loss function $F(\vW)$:
\begin{align}
  F^*:=  \min_{\vW}F(\vW):=\frac{1}{|\cV_{\cL}|}\sum_{v\in \cV_{\cL}}f(\vy_v,{\vh}^{(K)}_v) \label{eq:loss}
\end{align}
where $f$ is an appropriate loss function, $\vy_v$ is the true label for node $v\in\cV_{\cL}$, and ${\vh}^{(K)}_v={\vH}_{\mathrm{exact}}^{(K)}[v,:]^{\top}$.

\subsection{Neighbor Sampling and Receptive Fields}
As shown in \eqref{eq:GCN1}, each feature vector ${\vh}^{(k)}_v$ depends on $\vh^{(k-1)}_{v,\mathrm{exact}}$ and $\vh^{(k-1)}_{u,\mathrm{exact}} \; \forall u\in \cN_v$. This means the final feature representation vector for each node $v$, ${\vh}^{(K)}_v$, recursively depends on the feature representations of node $v$'s $K$-hop neighbors. For large, dense graphs, 
computing ${\vh}^{(K)}_v$ can be computationally expensive. One way to reduce these computational costs is by neighbor sampling \cite{NS_GraphSAGE}. Instead of using all neighbors $u\in  \cN_v$ of node $v$ to compute its representation at the next layer, at each layer $k$ we can select a small subset of node $v$'s neighbors $\widehat{\cN}^{(k)}_v\subset \cN_v$, where $|\widehat{\cN}^{(k)}_v|=D^{(k)}$. 

If $|\cN_v|$ is big, we can fix $D^{(k)}$ such that $D^{(k)}<<|\cN^{(k)}_v|$. 
To incorporate this neighbor sampling into the GCN framework $\vP$ in \eqref{eq:GCN1} is replaced with $\widehat{\vP}^{(k)}$, where
\begin{equation}\label{eq:hat-vP}
    \widehat{\vP}^{(k)}[v,u]=
    \begin{cases}
       \frac{|\cN_v|}{D^{(k)}}\vP[v,u]& \text{if } u\in \widehat{\cN}^{(k)}_v,\\[1mm]
       0 &\text{otherwise}.
    \end{cases}
\end{equation}
Let $\cV_B\subset \cV$ be a batch of nodes. The receptive field of $\cV_B$ at layer $k$ is defined as the set of nodes at layer $k$ whose feature representations are 
used to compute ${\vh}^{(K)}_v,\; \forall v\in \cV_B$. At the final layer $K$, $\vr^{(K)}_{\cV_B}$ is equal to $\cV_B$.  In the case of neighbor sampling, the receptive field $\vr^{(k)}_{\cV_B}$ at layer $k$ consists of the nodes in $\vr^{(k+1)}_{\cV_B}$ and their sampled neighbors at that layer. When nodes are sampled in a layer, as determined by \eqref{eq:hat-vP}, they are added to the layer's receptive field.

\subsection{Control Variate Estimator} 
Using features of sampled neighbors and skipping all other neighbor representations can cause a large deviation from the exact feature representation of a node. To address this issue, \cite{VRGCN} introduces the control variate estimator (CVE) that re-uses old feature representation of non-selected neighbors in the recursive computation.

Since the GCN feature representations are no longer exact when using this estimator, we use $\vH^{(k)}$ instead of $\vH^{(k)}_{\mathrm{exact}}$ to denote the node feature representations at layer $k$. Similar to  \eqref{eq:GCN1} and \eqref{eq:GCN2}, each feature representation $\vh^{(k)}_v$ is computed recursively from node $v$'s representations and those of its neighbors at each layer $k$. To mitigate large deviations from the exact feature representation, CVE maintains a matrix of historical feature representations $\overline{\vH}^{(k)}$ at each layer. The idea is that the feature representation vectors $\vh^{(k)}_v$ are only computed for nodes in the receptive field at that layer to save the computation time. The approximation $\bar{\vh}^{(k)}_v$  is updated every time $\vh^{(k)}_v$ is computed, i.e. for $v\in \vr^{(k)}_{\cV_B}$. The difference between the recursively computed $\vH^{(k)}$ and its historical approximation $\overline{\vH}^{(k)}$ is defined as $$\Delta \vH^{(k)}:=\vH^{(k)}-\overline{\vH}^{(k)}.$$

The more affordable version of the feature representation by using the CVE 
is defined as follows for $k=0,\dots,K-1$:
\begin{align}
    \vZ^{(k+1)}&= \big(\widehat{\vP}^{(k)}\Delta \vH^{(k)}+\vP\overline{\vH}^{(k)} \big)\vW^{(k)}, \label{eq:CV1}\\ 
\vH^{(k+1)}&=\sigma(\vZ^{(k+1)}).\label{eq:CV2}
\end{align}
If the weight matrices $\{\vW^{(k)}\}$ do not change too quickly from iteration to iteration, it is expected that $\vH^{(k)}$ and $\overline{\vH}^{(k)}$ will be close to each other. When $\vH^{(k)}=\overline{\vH}^{(k)}$, the feature representation in \eqref{eq:CV1} and \eqref{eq:CV2} becomes the exact one in \eqref{eq:GCN1} and \eqref{eq:GCN2}, since $\Delta \vH^{(k)}=\textbf{0}$. In \eqref{eq:CV1}, $\Delta \vH^{(k)}$ is multiplied by $\widehat{\vP}^{(k)}$ which performs neighbor sampling to save computation time. However, neighbor sampling is not applied to $\overline{\vH}^{(k)}$. This will not cause a high computational cost because $\overline{\vH}^{(k)}$ is a historical approximation that is not computed recursively. 

\subsection{Proposed Adam-type Methods with CVE}\label{sec:alg}
The work \cite{VRGCN} provides convergence results for their control variate algorithm, which uses SGD to update the GCN's parameters based on the control variate gradient estimator. 
 However, it has been demonstrated extensively that SGD converges significantly slower than Adam-type methods on training deep learning models with Adam serving as a popular optimizer for training GCNs \cite{chen2018fastgcn,VRGCN}.

To achieve fast and guaranteed convergence, we propose Adam-type methods for training GCNs by utilizing the CVE. We present our methods in
Algorithm~\ref{alg:CV Training}.
Randomness is present in the algorithm in the minibatch sampling and in the neighbor sampling. The neighbor sampling is encoded in the  $\{\widehat{\vP}^{(k)}\}$ matrices, which are defined in \eqref{eq:hat-vP}.
We use the CVE technique given in  \eqref{eq:CV1} and \eqref{eq:CV2} to approximate node features at each layer $k=1,\ldots,K$. After $K$ layers, the final node representations $\{{\vh}^{(K)}_v\}_{v\in\cV_t}$ from the sampled minibatch are used to compute the minibatch loss $\ell$. 
At the $t$-th iteration, the trainable weight matrices are in the following format: $$\vW_t= \{\vW^{(0)}_t,\vW^{(1)}_t,\ldots,\vW^{(K-1)}_t\}.$$

The stochastic gradient over minibatch $\cV_B$ that is used to update $\vW_t$ is computed as follows:
\begin{align}\label{eq:stochgrad}
  \vG_t=  \frac{1}{|\cV_{B}|}\sum_{v\in \cV_{B}}\nabla_{\vW_t} f(\vy_v,{\vh}^{(K)}_v) 
\end{align}
These matrices are updated by the generalized Adam-type method, such as Heavy-Ball SGD, AMSGrad, and AdaGrad, and SGD, which have different settings of $\vV_t$ as shown in Table~\ref{tab:algs}. 

The historical approximation matrices $\{\overline{\vH}^{(k)}\}_{k=0}^{K-1}$ are updated according to the CVE method \cite{VRGCN}. The approximation for a node's representation at a particular layer is updated whenever that node is included in the receptive field at that layer. In other words, the historical approximation of a node's feature vector is set to its most recent recursively computed feature vector at that layer. This way, we can simultaneously achieve a low approximation error and reduce the computation cost by avoiding recursively computing all neighbors' \emph{exact} feature representation. The use of $\overline{\vH}^{(k)}$ is critical and enables us to establish convergence of our algorithm without requiring the unbiasedness of the stochastic gradients.


\begin{algorithm}[h]
\caption{CVE Adam-type methods for solving~\eqref{eq:loss}}
\label{alg:CV Training}
\begin{algorithmic}[1]
\State \textbf{Input:} \textnormal{node feature matrix $\vX$, normalized adjacency matrix $\vP$, total number of iterations $T$, learning rate $\alpha>0$, and momentum parameter $\beta_1\in [0,1)$ }
\State Initialize: $\vW_1,\vM_{0}=\textbf{0},\vH^{(0)}=\vX,\overline{\vH}^{(0)}=\vX, \overline{\vH}^{(k)}=\vP \overline{\vH}^{(k-1)}\vW_1 \;\forall\, k=1,\ldots,K-1$
\For{$t=1,\ldots,T-1$}
    \State Take a minibatch $\cV_t \subset \cV_{\cL}$ (sampling with replacement) and perform
    \Statex \hspace{.5cm}neighbor sampling
        \State Compute receptive fields $\{\vr^{(k)}\}$ and stochastic propagation matrices $\{\widehat{\vP}^{(k)}\}$
        \Statex \hspace{.5cm}based on $\cV_t$ and sampled neighbors
        
        \For{$k=0,\ldots,K-1$ }
            \State $\vZ^{(k+1)}=\bigg(\widehat{\vP}^{(k)}(\vH^{(k)}-\overline{\vH}^{(k)})+\vP\overline{\vH}^{(k)} \bigg)\vW^{(k)}_t$
            \State $\vH^{(k+1)}=\sigma(\vZ^{(k+1)})$
        \EndFor
    \State Compute the minibatch loss 
    $$\ell(\vW_t)=\frac{1}{|\cV_{t}|}\sum_{v \in\cV_{t}}f(\vy_v,\vh_v^{(K)})$$
    \hspace{.5cm}where $\vh_v^{(K)}=\vH^{(K)}[v,:]^\top$.
    \State Compute the minibatch gradient $\vG_t=\nabla_{\vW_t}\ell$
    \State Update parameters by 
    \begin{align*}
    &\vM_t=\beta_1 \vM_{t-1}+(1-\beta_1)\vG_{t},\\ &\vV_{t}=h_t(\vG_{1},\ldots,\vG_{t}),\\
    &\vW_{t+1}=\vW_t-\alpha \vM_{t}/\sqrt{\vV_{t}}.
    \end{align*}
    \vspace{-4mm}
    \State Update historical approximations:
    \For{$k=0,\ldots,K-1$} 
        \For{$v\in \vr^{(k)}$}
        \State $\Bar{\vh}_v^{(k)}= \vh_v^{(k)}$
        \EndFor
    \EndFor
\EndFor
\State\textbf{Return} $\vW_{\tau}$, where $\tau$ is selected from $\{1,\ldots,T\}$ uniformly at random 
\end{algorithmic}
\end{algorithm}

\section{Related Works}\label{sec:relatedwork}


The GNN model was introduced by \cite{gori2005new,scarselli2008graph}. Extensions of these GNN models include Gated GNNs \cite{li2015gated}, GraphESN \cite{gallicchio2010graph}, and stochastic steady-state embedding (SSE) \cite{dai2018learning}. These methods are classified as Recurrent GNNs and use message passing to exchange information between graph nodes \cite{surveypaper}.

More recent work applies the concept of convolutions to graph data by aggregating neighbor information. Spectral-based methods perform these convolutions using the normalized graph Laplacian matrix \cite{surveypaper}. Examples of these types of methods include the Spectral CNN \cite{bruna2013spectral}, the deep CNN on graph data method \cite{henaff2015deep}, fast localized spectral filtering \cite{defferrard2016convolutional}, the CayleyNet method \cite{levie2018cayleynets}, and Adaptive GCN \cite{li2018adaptive}. 

The graph convolutional network model (GCN) \cite{OriginalGCN} performs graph convolutions using the normalized graph adjacency matrix, which aggregates feature representation vectors of nodes and their neighbors recursively throughout the layers of the network. The DualGCN \cite{zhuang2018dual} model incorporates an additional convolution to the traditional GCN, which is based on random walks. \cite{gao2018large} also extend upon traditional GCNs with their large-scale learnable GCN model.

Several variations of the GCN architecture have been introduced that use neighbor sampling to reduce the number of recursive computations required by traditional GCNs and improve their scalability. GraphSAGE \cite{NS_GraphSAGE} uniformly samples node neighbors to reduce the receptive field size. FASTGCN \cite{chen2018fastgcn} uses importance sampling to sample neighbors. LADIES \cite{LADIESzou2019layer} uses layer-dependent importance sampling. 
MG-GCN \cite{huang2020mg} introduces a degree-based sampling method, which performs neighbor sampling from different layers. \cite{huang2018adaptive} uses adaptive layer-wise sampling to address the scalability issue caused by large receptive field sizes across GCN layers. 

A few other methods use graph sampling as opposed to neighbor sampling to make training GCNs less computationally expensive. Cluster-GCN \cite{chiang2019cluster} and GraphSAINT \cite{zeng2019graphsaint}
both sample subgraphs to use for GCN training, as opposed to training on the entire graph. PromptGCN \cite{ji2024promptgcn} extends upon these graph sampling methods by allowing information sharing between subgraphs. SSGCN \cite{wang2024ssgcn} integrates graph convolutions across multiple minibatch learners. 

Among more recent methods in GCN training, Bi-GCN \cite{wang2021bi} reduces memory requirements by binarizing node feature vectors and GCN parameters. Label-GCN \cite{bellei2021label} modifies the traditional GCN architecture to propagate label information by eliminating self-loops. GCN-SL \cite{jiang2021gcn} and GCN-SA \cite{jiang2024self} modify the GCN architecture for better performance on graphs with low homophily. \cite{huang2024geometric} proposes GLGCN, integrating GCNs with multi-view learning for image data. 
\cite{cong2103importance} proposes and analyzes SGCN++,  a doubly variance reduction method for GCN training that can be applied to different GCN sampling algorithms, including the CVE method.

Other than developing GCN training methods, many other papers develop GCN models for specific applications. STFGCN \cite{ma2024spatiotraffic}, ST-DAGCN \cite{liu2024sttraffic}, and STIDGCN \cite{liu2024spatialtraffic} utilize GCNs for traffic forecasting. The IP-GCN model \cite{ali2024ip} predicts insulin protein for diabetes drug design. \cite{wang2023seerecommender} proposes KDGCN-IC and KDGCN-DC, which apply GCNs to recommender systems.

To the best of our knowledge, none of these works have established guaranteed convergence for Adam-type methods on training GCNs, though some papers, e.g., \cite{VRGCN}, demonstrate the empirical performance of Adam with neighbor sampling technique.

\section{Convergence Results}\label{sec:convergence}

In this section, we present our convergence results for Algorithm~\ref{alg:CV Training}. 
Due to the nonconvexity of the objective function $F$ in \eqref{eq:loss}, we do not expect to find a global optimal solution. Instead we show the convergence to a stationary solution by bounding $\EE[\|\nabla F(\vW_\tau)\|^2]$, where $\vW_\tau$ is the output of Algorithm~\ref{alg:CV Training} after $T$ iterations.

Assuming unbiased stochastic gradient, the work \cite{AdamAnalysis} provides convergence analysis for several Adam-type methods, 
which includes AMSGrad, Heavy-Ball SGD, SGD, and AdaGrad. The unbiasedness assumption does not hold for training GCNs using the neighbor sampling and/or CVE techniques; explained below. Hence, the results in \cite{AdamAnalysis} do not apply to a CVE-based Adam-type method. Though \cite{ajalloeian2020convergence} 
analyzes biased stochastic gradient methods, its assumption on the bias does not hold for the CVE-based Adam-type method. 

Our stochastic gradient is biased due to the neighbor sampling and the nonlinear activation function $\sigma(\cdot)$. Though $\widehat{\vP}^{(k)}$ defined in \eqref{eq:hat-vP} is an unbiased estimator of $\vP$, after $\vZ^{(k+1)}$ is passed through the non-linear activation function $\sigma$ in \eqref{eq:CV2}, $\vH^{(k+1)}$ is no longer an unbiased estimator of $\vH_{\mathrm{exact}}^{(k+1)}$, because $\EE[\sigma(\vZ)]\neq \sigma(\EE[\vZ])$ in general. Therefore $\vG_t$ is not an unbiased estimator of $\nabla F(\vW_t)$.

To address the challenge caused by the use of biased gradient, we apply a result in \cite{VRGCN} and show that 
\begin{align}
    \left\|\EE\big[\vG_t-\nabla F(\vW_t)\,|\vW_t\big]\right\| = O(\alpha)\label{eq:lemma6},
\end{align}
 which is stated in Lemma~\ref{lem:delta_bound} in Appendix~\ref{appendixA1}. While Lemma~\ref{lem:delta_bound} is established for the CVE-based stochastic gradient, other biased stochastic methods can also have such a bound such as the block gradient method in \cite{xu2015block}. 
 We highlight that our convergence results established in this section for Adam-type methods hold if the stochastic gradient satisfies \eqref{eq:lemma6}, in which case our results generalize to other applications and are not restricted to the GCN training problem. 
 This bound enables us to further bound, by $O(\alpha^2)$, a cross-product term that involves the stochastic error of $\vG_t$; see~\eqref{eq:36}. Notice that $O(\alpha^2)$ is often a dominating term about stochastic variance while analyzing stochastic gradient-type methods. Hence, we can still show convergence rates of Algorithm~\ref{alg:CV Training}, which are as good as or even better than the results in \cite{AdamAnalysis}, even though biased stochastic  gradients are used in our algorithm.    

\subsection{Assumptions}\label{sec:assumptions}
We make the following assumptions for our analysis. These assumptions are standard for analyzing Adam-type methods. 
Notice that we do not assume unbiasedness of $\vG_t$ for each iteration $t$. 
\begin{assumption}[smoothness]
\label{assum:smooth}
The objective function $F$ in \eqref{eq:loss} is differentiable and has a $\rho$-Lipschitz gradient, i.e. $$\|\nabla F(\vW)-\nabla F(\widehat{\vW}) \|\leq \rho \|\vW-\widehat{\vW} \| ,\;\forall \;\vW,\widehat{\vW}.$$
\end{assumption}
\begin{assumption}\label{assum:low-bd}
There exists a positive constant $\nu_{\mathrm{min}}$ such that
     $$(\vV_{t})_j \geq \nu_{\mathrm{min}}^2>0, \forall j.$$
\end{assumption}
Here, notice that $\vV_t$ has the same format as $\vW$, i.e., containing a set of matrices. The above condition should read as that each component of $\vV_t$ is lower bounded by $\nu_{\mathrm{min}}^2$.
\begin{assumption}[bounded gradients]
\label{assum:bound-grad}
The gradient of the function $F$ in \eqref{eq:loss} and the used stochastic gradients are bounded. More precisely, there exist positive constants $H_F$, $H_\infty$, and $H_1$ such that for each $t\ge0$,
\begin{align*}
&\|\nabla F(\vW_t) \|\leq H_F,\ \|\vG_{t} \|\leq H_F,\\
&\|\nabla F(\vW_t) \|_{\infty}\leq H_{\infty},\  \|\vG_t \|_{\infty}\leq H_{\infty},\\
&\|\nabla F(\vW_t) \|_{1}\leq H_{1},\ \|\vG_t \|_{1}\leq H_{1}.
\end{align*}
\end{assumption}

The condition $(\vV_{t})_j \geq \nu_{\mathrm{min}}^2>0, \forall j$ in Assumption~\ref{assum:low-bd} is required to avoid zero division. It automatically holds for certain settings such as for SGD and Heavy-Ball SGD method in Table~\ref{tab:algs}. It can also easily hold for other settings such as for AMSGrad if every entry of $\hat\vV_0$ is no less than $\nu_{\mathrm{min}}^2$. For AdaGrad in Table~\ref{tab:algs}, the obtained $\vV_t$ may not satisfy such a condition. Nevertheless, we can slightly change the $\vW$-update to $\vW_{t+1}=\vW_t-\alpha \vM_{t}/\sqrt{\vV_{t}+\nu_{\mathrm{min}}^2}$, and our analysis still holds with slight modifications.

\subsection{A General Case}

We first establish a general result without specifying the choice of the function $h_t$ in setting $\vV_t$. This result is the key to show convergence of several specific Adam-type methods listed in Table~\ref{tab:algs}. The proof of this Theorem is included in Appendix~\ref{appendixA1}. We modify the analysis of \cite{AdamAnalysis} by accommodating the biased stochastic gradient. 

\begin{theorem}[Key inequality]\label{thm:main_result}
Under Assumptions~\ref{assum:smooth}--\ref{assum:bound-grad}, let $\{\vW_t\}$ and $\{\vV_t\}$ be generated from Algorithm~\ref{alg:CV Training}, then it holds
\begin{equation}\label{eq:main-result}
\begin{aligned}
 \EE\bigg[\sum_{i=1}^T \ai\bigg\langle \nabla F(\vW_i) ,\frac{\nabla F(\vW_i)}{\svi} \bigg\rangle  \bigg]
&\leq C_1\EE \left[\sum_{i=2}^T\left\lVert\frac{\alpha}{\sqrt{\vV_i}}-\frac{\alpha}{\sqrt{\vV_{i-1}}} \right\rVert_1\right] \\
&+C_2\EE \left[\sum_{i=2}^T \left\lVert\frac{\alpha}{\sqrt{\vV_i}}-\frac{\alpha}{\sqrt{\vV_{i-1}}} \right\rVert^2\right]\\
&+C_3\alpha^2T+C_4\alpha+\EE[F({\vW}_1)-F^*],
\end{aligned}
\end{equation}
where $C_1,C_2,C_3,C_4$ are constants independent of $T$ and $\alpha$, and they are defined as follows: 
\begin{subequations}
    \label{eq:constants}
\begin{align}
C_1&=H_{\infty}^2\frac{\beta_1}{1-\beta_1}+2H_{\infty}^2
        \label{eq:C1}\\
C_2&=\rho\bigg(\frac{\beta_1}{1-\beta_1} \bigg)^2H_{\infty}^2, \label{eq:C2}\\
    C_3&=\frac{\rho H_F^2}{\nu_{\mathrm{min}}^2} +\frac{H_F^2}{2\nu_{\mathrm{min}}^2}\bigg(\rho^2\frac{\beta_1^2}{(1-\beta_1)^2}+1 \bigg)+\frac{CH_1H_{\infty}}{\nu_{\mathrm{min}}^2}, \label{eq:C3}\\
    C_4&=\frac{H_1H_{\infty}}{\nu_{\mathrm{min}}}, \label{eq:C4}
        \\
\end{align}
\end{subequations}
with $C$ being a universal constant.
\end{theorem}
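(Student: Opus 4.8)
The plan is to adapt the auxiliary-iterate argument of \cite{AdamAnalysis} to the biased setting, invoking the bound \eqref{eq:lemma6} at exactly the point where their analysis uses unbiasedness. Writing $\va_t:=\alpha/\sqrt{\vV_t}$ (componentwise) for the effective stepsize, I would first introduce the shifted sequence
\[
\vZ_t:=\vW_t+\frac{\beta_1}{1-\beta_1}(\vW_t-\vW_{t-1}),\qquad \vW_0:=\vW_1,
\]
so that $\vZ_1=\vW_1$. Combining $\vW_{t+1}-\vW_t=-\va_t\odot\vM_t$ with the momentum recursion $\vM_t=\beta_1\vM_{t-1}+(1-\beta_1)\vG_t$ gives, after cancellation, the clean identity
\[
\vZ_{t+1}-\vZ_t=-\va_t\odot\vG_t+\frac{\beta_1}{1-\beta_1}(\va_{t-1}-\va_t)\odot\vM_{t-1}.
\]
The value of this rewriting is that the momentum has been folded into a single SGD-like term $-\va_t\odot\vG_t$, while the \emph{entire} cost of using a non-constant adaptive stepsize is isolated in the second term, which is proportional to $\va_{t-1}-\va_t=\alpha/\sqrt{\vV_{t-1}}-\alpha/\sqrt{\vV_t}$. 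This is the source of the $\|\alpha/\sqrt{\vV_i}-\alpha/\sqrt{\vV_{i-1}}\|_1$ and squared terms on the right-hand side of \eqref{eq:main-result}; at $t=1$ it vanishes because $\vM_0=\vzero$, which is why those sums start at $i=2$.

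Next I would apply the descent lemma (Assumption~\ref{assum:smooth}) to $F(\vZ_{t+1})-F(\vZ_t)$ and expand $\langle\nabla F(\vZ_t),\vZ_{t+1}-\vZ_t\rangle$. The heart of the argument is to turn the leading piece $-\langle\nabla F(\vZ_t),\va_t\odot\vG_t\rangle$ into the target quantity $-\langle\nabla F(\vW_t),\va_t\odot\nabla F(\vW_t)\rangle$ through three controlled substitutions. First, replace $\va_t$ by $\va_{t-1}$ inside the inner product; since $\vV_t=h_t(\vG_1,\dots,\vG_t)$ depends on $\vG_t$ whereas $\va_{t-1}$ is $\cF_{t-1}$-measurable, this substitution is what makes conditioning legitimate, and it costs a term bounded by $H_\infty^2\|\va_t-\va_{t-1}\|_1$. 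Second, take $\EE[\,\cdot\,|\,\cF_{t-1}]$ and use \eqref{eq:lemma6} in the form $\EE[\vG_t\,|\,\cF_{t-1}]=\nabla F(\vW_t)+\vb_t$ with $\|\vb_t\|=O(\alpha)$; because $\va_{t-1}\le\alpha/\nu_{\mathrm{min}}$ componentwise, the residual cross term $\langle\nabla F(\vW_t),\va_{t-1}\odot\vb_t\rangle$ is $O(\alpha^2)$ rather than zero, and summed over $t$ it contributes the $\tfrac{CH_1H_\infty}{\nu_{\mathrm{min}}^2}$ piece of $C_3$. Third, convert $\va_{t-1}$ back to $\va_t$ and replace $\nabla F(\vZ_t)$ by $\nabla F(\vW_t)$, the latter via smoothness together with $\|\vZ_t-\vW_t\|=\frac{\beta_1}{1-\beta_1}\|\va_{t-1}\odot\vM_{t-1}\|\le\frac{\beta_1}{1-\beta_1}\frac{\alpha H_F}{\nu_{\mathrm{min}}}=O(\alpha)$. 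This accounts for the remaining $2H_\infty^2\|\va_t-\va_{t-1}\|_1$ in $C_1$ and, after Young's inequality on the smoothness residual, for the $\frac{H_F^2}{2\nu_{\mathrm{min}}^2}\big(\rho^2\frac{\beta_1^2}{(1-\beta_1)^2}+1\big)$ piece of $C_3$. It is precisely the second substitution where our analysis departs from \cite{AdamAnalysis}: under unbiasedness $\vb_t=\vzero$ and this term is absent.

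The remaining pieces are bounded by Cauchy--Schwarz/H\"older under Assumptions~\ref{assum:low-bd}--\ref{assum:bound-grad} and grouped into the stated constants. The momentum-correction term $\frac{\beta_1}{1-\beta_1}\langle\nabla F(\vZ_t),(\va_{t-1}-\va_t)\odot\vM_{t-1}\rangle$, using $\|\vM_{t-1}\|_\infty\le H_\infty$, gives the $H_\infty^2\frac{\beta_1}{1-\beta_1}$ part of $C_1$; the quadratic term $\frac{\rho}{2}\|\vZ_{t+1}-\vZ_t\|^2$, after $\|x+y\|^2\le2\|x\|^2+2\|y\|^2$, splits into an $\alpha^2$ piece giving $\frac{\rho H_F^2}{\nu_{\mathrm{min}}^2}$ in $C_3$ and an $\|\va_t-\va_{t-1}\|^2$ piece giving $C_2=\rho\big(\frac{\beta_1}{1-\beta_1}\big)^2H_\infty^2$. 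The boundary iteration $t=1$, where $\va_0$ is unavailable, is handled separately by splitting $\vG_1=\nabla F(\vW_1)+(\vG_1-\nabla F(\vW_1))$ directly: this reproduces the $i=1$ term on the left and leaves a single remainder of order $\frac{\alpha}{\nu_{\mathrm{min}}}\|\nabla F(\vW_1)\|_\infty\|\vG_1-\nabla F(\vW_1)\|_1$, matching $C_4\alpha$. Finally I would sum over $t=1,\dots,T$, telescope $F(\vZ_{t+1})-F(\vZ_t)$ using $\vZ_1=\vW_1$ and $F(\vZ_{T+1})\ge F^*$, take total expectation, and rearrange to obtain \eqref{eq:main-result}.

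I expect the main obstacle to be the second substitution and the accompanying bookkeeping of two separate factors of $\alpha$. The point is that the bias must be paired with the $\alpha$ already carried by $\va_{t-1}$ \emph{before} conditioning, so that \eqref{eq:lemma6} places the bias contribution at the second order $O(\alpha^2 T)$ --- a dominating-order stochastic term --- rather than at the fatal order $O(\alpha T)$ that would destroy convergence. Ensuring this requires the $\va_t\to\va_{t-1}$ swap to be performed first, so that $\EE[\vG_t\,|\,\cF_{t-1}]$ can be pulled out cleanly, and requires tracking the first- and second-order stepsize increments separately so they land in $C_1$ and $C_2$ rather than being crudely over-bounded; the rest is routine norm estimation.
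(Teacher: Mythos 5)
Your proposal follows essentially the same route as the paper's proof: the auxiliary iterate $\widetilde{\vW}_t=\vW_t+\frac{\beta_1}{1-\beta_1}(\vW_t-\vW_{t-1})$, the descent lemma with the momentum and stepsize-increment terms isolated, and---crucially---pairing the $O(\alpha)$ bias bound of Lemma~\ref{lem:delta_bound} with the $\cF_{t-1}$-measurable factor $\alpha/\sqrt{\vV_{t-1}}$ so that the bias enters only at order $\alpha^2 T$, with the $t=1$ boundary term producing $C_4\alpha$. The only cosmetic difference is that you perform two separate stepsize swaps each costing $H_{\infty}^2\lVert\alpha/\sqrt{\vV_t}-\alpha/\sqrt{\vV_{t-1}}\rVert_1$ (one on $\vG_t$, one on $\nabla F(\vW_t)$), whereas the paper bounds a single $\bm{\delta}_t$-weighted increment by $2H_{\infty}^2\lVert\cdot\rVert_1$; both yield the same constant $C_1$.
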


Recall that $\vW_\tau$ is the output of Algorithm~\ref{alg:CV Training} after $T$ iterations. Thus to establish convergence to stationarity (in expectation), we need $\EE[\|\nabla F(\vW_\tau)\|^2] \to 0$ as $T\to \infty$. Since $\tau$ is selected from $\{1,\ldots,T\}$ uniformly at random, it holds
\begin{align*}
  \,  T \alpha \EE\left[\bigg\langle \nabla F(\vW_\tau) ,\frac{\nabla F(\vW_\tau)}{\sqrt{\vV_\tau}} \bigg\rangle\right]
  =\,\EE\bigg[\sum_{i=1}^T \ai\bigg\langle \nabla F(\vW_i) ,\frac{\nabla F(\vW_i)}{\svi} \bigg\rangle  \bigg].
\end{align*} 
Hence, if $\vV_\tau$ is upper bounded entrywise  
and the right hand side of \eqref{eq:main-result} is upper bounded by a constant, we can show that $\EE[\|\nabla F(\vW_\tau)\|^2] = O(\frac{1}{T\alpha})$. The convergence rate results  in the next subsection will be established by adopting this idea.

\subsection{Several Specific Optimizers}
In this subsection, we give a few specific choices of $\vV_t$ and show the convergence results of Algorithm~\ref{alg:CV Training} under these choices.
In Table \ref{tab:algs}, we list four optimizers and the corresponding choices of $\beta_1$ and $\vV_t$, as well as the convergence rate results measured on $\EE[\|\nabla F(\vW_\tau)\|^2]$.

\begin{table}[h]
\caption{Settings of $\beta_1$ and $\{\vV_t\}$ for different optimizers and their corresponding convergence rates.} \label{tab:algs}%
\begin{tabular}{@{}llll@{}}
\toprule
        Optimizer & $\beta_1$ & $\vV_t$ & Rate\\ 
        \midrule
        SGD        & 0 & \textbf{1} & $\frac{1}{\sqrt{T}}$   \\
        Heavy-Ball SGD        &\textgreater 0 & \textbf{1}  & $\frac{1}{\sqrt{T}}$   \\
        \multirow{2}{*}{AMSGrad}    &   \multirow{2}{*}{\textgreater 0} & $\hat{\vV}_t=\beta_2\hat{\vV}_{t-1} +(1-\beta_2)\vG_t^2$ & \multirow{2}{*}{$\frac{1}{\sqrt{T}}$} \\
   & & $\vV_t=\max(\vV_{t-1},\hat{\vV}_t)$  &   \\
        AdaGrad &  \textgreater 0 & $\frac{1}{t}\sum_{i=1}^t\vG_i^2$ & $\frac{\log(T)}{T}+\frac{1}{\sqrt{T}}$\\
\botrule
\end{tabular}
\end{table}

\subsubsection{AMSGrad Convergence}

We first present the convergence rate result of Algorithm~\ref{alg:CV Training} by using the AMSGrad optimizer.

\begin{theorem}[AMSGrad Optimizer]\label{thm:AMSGrad}
Under Assumptions~\ref{assum:smooth}--\ref{assum:bound-grad}, let $\{\vW_t\}$ be generated from  
Algorithm~\ref{alg:CV Training} with $\alpha=\frac{\eta}{\sqrt{T}}$ for some $\eta>0$, $\beta_1\in (0,1)$ and $\vV_t$ by
$$\hat{\vV}_t=\beta_2\hat{\vV}_{t-1} +(1-\beta_2)\vG_t^2, \vV_t=\max(\vV_{t-1},\hat{\vV}_t)$$
for some $\beta_2\in (0,1)$. Then
\begin{align*}
   \EE \bigg[\|\nabla F(\vW_{\tau}) \|^2 \bigg]&\leq
   \frac{1}{T}\bigg(\frac{C_1dH_{\infty}}{\nu_{\mathrm{min}}}+C_4H_{\infty} 
  \bigg)\\
  &+\frac{1}{\sqrt{T}}\bigg(\frac{C_2\eta dH_{\infty}}{\nu_{\mathrm{min}}^2}+C_3\eta H_{\infty}+\frac{H_{\infty}}{\eta}\EE[F({\vW}_1)-F^*]\bigg), 
\end{align*}
where $C_1,\ldots,C_4$ are defined in \eqref{eq:constants}.
\end{theorem}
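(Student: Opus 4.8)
The plan is to derive the stated bound directly from the key inequality~\eqref{eq:main-result} of Theorem~\ref{thm:main_result}, exploiting two structural features of the AMSGrad choice of $\vV_t$: it is bounded both above and below entrywise, and it is entrywise nondecreasing in $t$. First I would rewrite the left-hand side of~\eqref{eq:main-result} using the uniform random index $\tau$, so that $\EE\big[\sum_{i=1}^T \ai \langle \nabla F(\vW_i), \nabla F(\vW_i)/\svi\rangle\big] = T\alpha\,\EE\big[\langle \nabla F(\vW_\tau), \nabla F(\vW_\tau)/\sqrt{\vV_\tau}\rangle\big]$. To turn this into the target quantity $\EE[\|\nabla F(\vW_\tau)\|^2]$, I would establish the entrywise upper bound $(\vV_t)_j \le H_{\infty}^2$ for all $t,j$: since $\|\vG_t\|_\infty \le H_{\infty}$ gives $(\vG_t^2)_j \le H_{\infty}^2$, the recursion $\hat\vV_t = \beta_2 \hat\vV_{t-1} + (1-\beta_2)\vG_t^2$ keeps $(\hat\vV_t)_j \le H_{\infty}^2$ by induction, and the max in $\vV_t=\max(\vV_{t-1},\hat\vV_t)$ preserves this bound provided the initialization respects it. Consequently $1/\svi \ge 1/H_{\infty}$ entrywise, so $\langle \nabla F(\vW_i), \nabla F(\vW_i)/\svi\rangle \ge \|\nabla F(\vW_i)\|^2 / H_{\infty}$, which lower-bounds the left-hand side by $\frac{T\alpha}{H_{\infty}}\EE[\|\nabla F(\vW_\tau)\|^2]$.

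Next I would bound the two difference-of-inverse-square-root sums on the right-hand side, and this is where the monotonicity of AMSGrad does the real work. Because $\vV_t$ is entrywise nondecreasing, $\alpha/\svi - \alpha/\svip \le 0$ componentwise, so the $\ell_1$ sum telescopes: $\sum_{i=2}^T \|\alpha/\svi - \alpha/\svip\|_1 = \sum_j \big(\alpha/\sqrt{(\vV_1)_j} - \alpha/\sqrt{(\vV_T)_j}\big) \le \alpha d/\nu_{\mathrm{min}}$, using Assumption~\ref{assum:low-bd} that $(\vV_1)_j \ge \nu_{\mathrm{min}}^2$, where $d$ denotes the total number of entries of $\vW$. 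For the squared-$\ell_2$ sum I would use $\|\vu\|^2 \le \|\vu\|_\infty \|\vu\|_1$ together with the entrywise bound $|\alpha/\sqrt{(\vV_i)_j} - \alpha/\sqrt{(\vV_{i-1})_j}| \le \alpha/\nu_{\mathrm{min}}$, which yields $\sum_{i=2}^T \|\alpha/\svi - \alpha/\svip\|^2 \le \frac{\alpha}{\nu_{\mathrm{min}}}\cdot \frac{\alpha d}{\nu_{\mathrm{min}}} = \alpha^2 d/\nu_{\mathrm{min}}^2$. Substituting these two bounds into~\eqref{eq:main-result} gives
\begin{equation*}
\frac{T\alpha}{H_{\infty}}\EE\big[\|\nabla F(\vW_\tau)\|^2\big] \le \frac{C_1 \alpha d}{\nu_{\mathrm{min}}} + \frac{C_2 \alpha^2 d}{\nu_{\mathrm{min}}^2} + C_3\alpha^2 T + C_4\alpha + \EE[F(\vW_1)-F^*].
\end{equation*}

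Finally I would multiply through by $H_{\infty}/(T\alpha)$ and substitute $\alpha = \eta/\sqrt{T}$, collecting terms by their power of $T$: the $C_1$ and $C_4$ terms become $O(1/T)$, while the $C_3$ term and the $\EE[F(\vW_1)-F^*]$ term become $O(1/\sqrt{T})$; the $C_2$ term is genuinely $O(T^{-3/2})$, but I would bound it crudely by $O(1/\sqrt{T})$ to match the stated two-group form. I expect no single step to be a serious obstacle once the two key facts are in place; the only point genuinely requiring care is the entrywise upper bound $(\vV_t)_j \le H_{\infty}^2$, which is needed for the left-hand-side lower bound and is not listed explicitly among the assumptions — it must be argued from the AMSGrad recursion together with the bounded-gradient Assumption~\ref{assum:bound-grad} and a compatible initialization of $\hat\vV_0$.
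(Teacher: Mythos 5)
Your proposal is correct and follows essentially the same route as the paper's proof: lower-bound the left side of \eqref{eq:main-result} via the entrywise bound $(\vV_t)_j \le H_{\infty}^2$ (proved by induction from the AMSGrad recursion and $\|\vG_t\|_\infty\le H_\infty$), telescope the $\ell_1$ sum using monotonicity of $\vV_t$, bound the squared sum, then divide by $T\alpha/H_{\infty}$ and set $\alpha=\eta/\sqrt{T}$. The only deviation is your bound on the squared sum --- combining $\|\vu\|^2\le\|\vu\|_\infty\|\vu\|_1$ with the telescoped $\ell_1$ bound gives $\alpha^2 d/\nu_{\mathrm{min}}^2$, a factor of $T$ sharper than the paper's $\alpha^2 d T/\nu_{\mathrm{min}}^2$ from \eqref{eq:bd-2nd-term-main} --- but since that term is not the bottleneck, both arguments yield the identical stated bound.
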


\begin{proof}
For the case of AMSGrad, $\vV_t$ is defined as follows:
\begin{align}\label{eq:update-V-AMSGrad}
    \hat{\vV}_t&=\beta_2\hat{\vV}_{t-1} +(1-\beta_2)\vG_t^2, 
    \vV_t=\max(\vV_{t-1},\hat{\vV}_t).
\end{align}
This means that $(\vV_t)_j\geq(\vV_{t-1})_j,\,\forall\, j$. 
Below we upper bound the first two terms in the RHS of \eqref{eq:main-result}.

To bound the first term in the RHS of \eqref{eq:main-result}, we have from $(\vV_t)_j\geq(\vV_{t-1})_j,\,\forall\, j$ that
\begin{align*}
    \EE\left[\sum_{t=2}^T \left\lVert\frac{\alpha}{\sqrt{\vV_t}}-\frac{\alpha}{\sqrt{\vV_{t-1}}}  \right\rVert_1\right]&=\EE\bigg[\sum_{j=1}^d\sum_{t=2}^T \bigg(\frac{\alpha}{(\sqrt{\vV_{t-1}})_j}-\frac{\alpha}{(\sqrt{\vV_{t}})_j}  \bigg)\bigg]\\
    &=\EE\bigg[\sum_{j=1}^d \bigg(\frac{\alpha}{(\sqrt{\vV_{1}})_j}-\frac{\alpha}{(\sqrt{\vV_{T}})_j}  \bigg)\bigg]\\
    &\leq \EE\bigg[\sum_{j=1}^d \frac{\alpha}{(\sqrt{\vV_{1}})_j} \bigg]\\
    &\leq \frac{d\alpha}{\nu_{\mathrm{min}}}.
\end{align*}

For the second term in the RHS of \eqref{eq:main-result}, it holds that
\begin{align}\label{eq:bd-2nd-term-main}
\EE\bigg[\sum_{t=2}^{T} \left\lVert\frac{\alpha}{\sqrt{\vV_t}}-\frac{\alpha}{\sqrt{\vV_{t-1}}}\right\rVert^2\bigg]&=\alpha^2\EE\bigg[\sum_{t=2}^{T}\sum_{j=1}^d \bigg( \frac{1}{(\sqrt{\vV_t})_j}-\frac{1}{(\sqrt{\vV_{t-1}})_j}  \bigg)^2   \bigg]\notag\\
&\leq \alpha^2\EE\bigg[\sum_{t=2}^{T}\sum_{j=1}^d \max\bigg( \frac{1}{(\sqrt{\vV_t})_j},\frac{1}{(\sqrt{\vV_{t-1}})_j}  \bigg)^2  \bigg]\notag\\
&\leq \alpha^2\EE\bigg[\sum_{t=2}^{T}\sum_{j=1}^d \frac{1}{\nu_{\mathrm{min}}^2}  \bigg]\notag\\
&=\frac{\alpha^2 d (T-1)}{\nu_{\mathrm{min}}^2}\notag\\
&\leq \frac{\alpha^2 d T}{\nu_{\mathrm{min}}^2}.
\end{align}

Using the above two bounds, the right side of \eqref{eq:main-result} can be bounded by
\begin{equation}\label{eq:up-bd-right-main}
\begin{aligned}
   &\EE \bigg[C_1\sum_{i=2}^T\left\lVert\frac{\alpha}{\sqrt{\vV_i}}-\frac{\alpha}{\sqrt{\vV_{i-1}}} \right\rVert_1 +C_2\sum_{i=2}^T \left\lVert\frac{\alpha}{\sqrt{\vV_i}}-\frac{\alpha}{\sqrt{\vV_{i-1}}} \right\rVert^2\bigg]\\
&+C_3\alpha^2T+C_4\alpha+\EE[F({\vW}_1)-F^*]\\
    &\leq C_1\frac{d\alpha}{\nu_{\mathrm{min}}}+C_2 \frac{\alpha^2 d T}{\nu_{\mathrm{min}}^2}+C_3\alpha^2T+C_4\alpha+\EE[F({\vW}_1)-F^*].
\end{aligned}
\end{equation}

Moreover, we have $\frac{1}{(\sqrt{\vV_t})_j}\geq \frac{1}{H_\infty}, \forall\, j$, which can be proved by induction using the assumption that $\|\vG_t\|_{\infty}\leq H_{\infty}$ and the update rule for $\vV_t$ in \eqref{eq:update-V-AMSGrad}. 
Hence, the left side of \eqref{eq:main-result} is lower bounded by
\begin{align}\label{eq:lw-bd-left-main}
   \EE\bigg[\sum_{t=1}^T \alpha \langle \nabla F(\vW_t),\nabla F(\vW_t)/\sqrt{\vV_t} \rangle\bigg]&\geq \frac{\alpha}{H_{\infty}} \EE \bigg[\sum_{t=1}^T\|\nabla F(\vW_t) \|^2 \bigg]= \frac{\alpha}{H_{\infty}} T\EE \bigg[\|\nabla F(\vW_{\tau}) \|^2 \bigg],
\end{align}
where the equality follows from $\tau \sim \{1,\ldots,T\}$ uniformly at random. 
Combining \eqref{eq:up-bd-right-main} and \eqref{eq:lw-bd-left-main} gives 
\begin{align*}
    \frac{\alpha}{H_{\infty}} T\EE \bigg[\|\nabla F(\vW_{\tau}) \|^2 \bigg]\leq C_1\frac{d\alpha}{\nu_{\mathrm{min}}}+C_2 \frac{\alpha^2 d T}{\nu_{\mathrm{min}}^2}+C_3\alpha^2T+C_4\alpha+\EE[F({\vW}_1)-F^*].
\end{align*}
Now dividing both sides of the above inequality by $\frac{\alpha}{H_{\infty}} T$ and using $\alpha=\frac{\eta}{\sqrt{T}}$ yields the desired result.    
\end{proof}

\subsubsection{Heavy-Ball SGD Convergence}

The next theorem is about the convergence rate of Algorithm~\ref{alg:CV Training} by using the Heavy-Ball SGD optimizer. As a special case, it also applies to the classic SGD that uses $\beta_1=0$.

\begin{theorem}[Heavy-Ball SGD Optimizer]\label{thm:SGD_HeavyBall}
Under Assumptions~\ref{assum:smooth}--\ref{assum:bound-grad}, let $\{\vW_t\}$ be generated from  
Algorithm~\ref{alg:CV Training} with $\alpha=\frac{\eta}{\sqrt{T}}$ for some $\eta>0$, $\beta_1\in [0,1)$ and $\vV_t=\vone, \forall\, t$. Then
\begin{align*}
    \EE \bigg[&\|\nabla F(\vW_{\tau}) \|^2 \bigg]
    \leq 
    \frac{C_4}{T}+\frac{1}{\sqrt{T}}\bigg(C_3\eta+\frac{1}{\eta}\EE[F({\vW}_1)-F^*]\bigg),
\end{align*}
where $C_3$ and $C_4$ are defined in \eqref{eq:C3} and \eqref{eq:C4}. 
\end{theorem}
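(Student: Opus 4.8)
The plan is to derive this result as an immediate specialization of the key inequality in Theorem~\ref{thm:main_result}, exploiting the fact that the Heavy-Ball (and plain SGD) setting fixes $\vV_t=\vone$ for every $t$. The first thing I would observe is that with $\vV_t=\vone$ we have $\sqrt{\vV_t}=\vone$, so the consecutive-difference quantities $\frac{\alpha}{\sqrt{\vV_i}}-\frac{\alpha}{\sqrt{\vV_{i-1}}}$ are identically $\vzero$. Consequently the entire $C_1$ and $C_2$ contributions on the right-hand side of \eqref{eq:main-result} vanish, and the bound collapses to
\begin{equation*}
\EE\bigg[\sum_{i=1}^T \ai\bigg\langle \nabla F(\vW_i),\frac{\nabla F(\vW_i)}{\svi}\bigg\rangle\bigg]
\leq C_3\alpha^2 T + C_4\alpha + \EE[F(\vW_1)-F^*].
\end{equation*}

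Next I would simplify the left-hand side. Since $\svi=\vone$, the inner product reduces to $\langle \nabla F(\vW_i),\nabla F(\vW_i)\rangle=\|\nabla F(\vW_i)\|^2$, and because $\tau$ is drawn uniformly from $\{1,\dots,T\}$ the sum $\EE[\sum_{i=1}^T \alpha\|\nabla F(\vW_i)\|^2]$ equals $\alpha T\,\EE[\|\nabla F(\vW_\tau)\|^2]$ — the same averaging identity used in \eqref{eq:lw-bd-left-main} for the AMSGrad proof. This turns the inequality into
\begin{equation*}
\alpha T\,\EE[\|\nabla F(\vW_\tau)\|^2]\leq C_3\alpha^2 T + C_4\alpha + \EE[F(\vW_1)-F^*].
\end{equation*}

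Finally I would divide both sides by $\alpha T$ to obtain $\EE[\|\nabla F(\vW_\tau)\|^2]\leq C_3\alpha + \frac{C_4}{T}+\frac{1}{\alpha T}\EE[F(\vW_1)-F^*]$, and then substitute the prescribed stepsize $\alpha=\frac{\eta}{\sqrt{T}}$. The three terms become $\frac{C_3\eta}{\sqrt{T}}$, $\frac{C_4}{T}$, and $\frac{1}{\eta\sqrt{T}}\EE[F(\vW_1)-F^*]$ respectively, which regroup precisely into the claimed bound. I would also note that Assumption~\ref{assum:low-bd} holds automatically here with $\nu_{\mathrm{min}}=1$ (every entry of $\vV_t$ equals $1$), so Theorem~\ref{thm:main_result} applies, and that the choice $\beta_1=0$ recovers classic SGD without any change to the argument. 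There is essentially no genuine obstacle in this specialization: all of the difficulty — in particular accommodating the biased CVE gradient through the $O(\alpha)$ bias bound \eqref{eq:lemma6} — has already been absorbed into the proof of the key inequality, so what remains is bookkeeping plus the elementary observation that $\vV_t=\vone$ kills the two difference terms.
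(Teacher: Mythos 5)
Your proposal is correct and follows essentially the same route as the paper's own proof: specialize the key inequality of Theorem~\ref{thm:main_result} to $\vV_t=\vone$ so the two difference terms vanish, identify the left-hand side with $\alpha T\,\EE[\|\nabla F(\vW_\tau)\|^2]$ via the uniform choice of $\tau$, and divide by $\alpha T$ with $\alpha=\eta/\sqrt{T}$. Your added remark that Assumption~\ref{assum:low-bd} holds with $\nu_{\mathrm{min}}=1$ is a sensible bit of bookkeeping the paper leaves implicit.
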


\begin{remark}\label{rm:rate-amsgrad-hb}
When $T$ is sufficiently large, the terms involving $\frac{1}{\sqrt{T}}$ will dominate those of $\frac{1}{T}$ in Theorems~\ref{thm:AMSGrad} and~\ref{thm:SGD_HeavyBall}. Hence, we can simply write the convergence rate results as  $\EE \left[\|\nabla F(\vW_{\tau}) \|^2 \right]  = O(\frac{1}{\sqrt{T}})$. This convergence rate matches the lower bound result given in \cite{arjevani2023lower} for stochastic nonconvex optimization and thus is optimal. 
\end{remark}

\begin{proof}
    The Heavy-Ball SGD has $\vV_t=\vone, \forall t$. In this case,
the right side of \eqref{eq:main-result} becomes
\begin{align*}
  & \EE \bigg[C_1\sum_{i=2}^T\left\lVert\frac{\alpha}{\sqrt{\vV_i}}-\frac{\alpha}{\sqrt{\vV_{i-1}}} \right\rVert_1 +C_2\sum_{i=2}^T \left\lVert\frac{\alpha}{\sqrt{\vV_i}}-\frac{\alpha}{\sqrt{\vV_{i-1}}} \right\rVert^2\bigg]
+C_3\alpha^2T+C_4\alpha+\EE[F({\vW}_1)-F^*]\\
   = &\, C_3\alpha^2T+C_4\alpha+\EE[F({\vW}_1)-F^*]
\end{align*}
The left side of \eqref{eq:main-result} becomes
\begin{align*}
   \EE\bigg[\sum_{t=1}^T \alpha \langle \nabla F(\vW_t),\nabla F(\vW_t)/\sqrt{\vV_t} \rangle\bigg]&= \alpha \EE \bigg[\sum_{t=1}^T\|\nabla F(\vW_t) \|^2 \bigg]
   =\alpha T\EE \bigg[\|\nabla F(\vW_{\tau}) \|^2 \bigg].
\end{align*}
Combining the above two equations and plugging in $\alpha=\frac{\eta}{\sqrt{T}}$ gives the desired result. 
\end{proof}

\subsubsection{AdaGrad Convergence}

The theorem below gives the convergence rate of Algorithm~\ref{alg:CV Training} with the AdaGrad optimizer.
\begin{theorem}[AdaGrad Optimizer]\label{thm:AdaGrad}
Under Assumptions~\ref{assum:smooth}--\ref{assum:bound-grad}, let $\{\vW_t\}$ be generated from  
Algorithm~\ref{alg:CV Training} with $\alpha=\frac{\eta}{\sqrt{T}}$ for some $\eta>0$, $\beta_1\in [0,1)$ and $\vV_t=\frac{1}{t}\sum_{i=1}^t \vG_i^2, \forall\, t$. Then
\begin{align*}
   \EE \bigg[\|\nabla F(\vW_{\tau}) \|^2 \bigg]&\leq \frac{\log(T)}{T}\bigg(\frac{C_1dH_{\infty}^3}{\nu_{\mathrm{min}}^3} \bigg)+
   \frac{1}{T}(C_4H_{\infty})\\
   &+\frac{1}{\sqrt{T}}\bigg(\frac{C_2\eta dH_{\infty}}{\nu_{\mathrm{min}}^2}+C_3\eta H_{\infty}+\frac{H_{\infty}}{\eta}\EE[F({\vW}_1)-F^*]\bigg), 
\end{align*}
where $C_1,\ldots,C_4$ are defined in \eqref{eq:constants}.
\end{theorem}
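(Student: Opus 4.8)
The plan is to specialize the key inequality \eqref{eq:main-result} of Theorem~\ref{thm:main_result} to the AdaGrad choice $\vV_t=\frac{1}{t}\sum_{i=1}^t\vG_i^2$, bounding each of the four terms on its right-hand side and lower bounding its left-hand side, just as was done for AMSGrad. The essential new feature is that the AdaGrad sequence $\{\vV_t\}$ is \emph{not} entrywise monotone in $t$, so the telescoping argument used for AMSGrad is unavailable and must be replaced by a step-by-step estimate; this is what produces the logarithmic factor in the rate.

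Two of the ingredients transfer essentially verbatim. For the left-hand side I would note that $(\vV_t)_j=\frac{1}{t}\sum_{i=1}^t(\vG_i^2)_j\le H_\infty^2$ by Assumption~\ref{assum:bound-grad}, so $\frac{1}{(\sqrt{\vV_t})_j}\ge\frac{1}{H_\infty}$ for every $j$, giving the lower bound $\frac{\alpha}{H_\infty}\EE[\sum_{t=1}^T\|\nabla F(\vW_t)\|^2]=\frac{\alpha T}{H_\infty}\EE[\|\nabla F(\vW_\tau)\|^2]$ via the uniform sampling of $\tau$, exactly as in \eqref{eq:lw-bd-left-main}. The second (squared total-variation) term on the right-hand side is bounded identically to \eqref{eq:bd-2nd-term-main}: using $(a-b)^2\le\max(a,b)^2$ for nonnegative $a,b$ together with Assumption~\ref{assum:low-bd} gives $\le\frac{\alpha^2dT}{\nu_{\mathrm{min}}^2}$, a step that never invokes monotonicity.

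The main obstacle is the first ($\ell_1$ total-variation) term. I would bound it coordinatewise and per step. Writing $b_t=(\sqrt{\vV_t})_j$, the elementary estimates $|b_t-b_{t-1}|\le\frac{|(\vV_t)_j-(\vV_{t-1})_j|}{2\nu_{\mathrm{min}}}$ and hence $|\tfrac1{b_t}-\tfrac1{b_{t-1}}|=\frac{|b_{t-1}-b_t|}{b_tb_{t-1}}\le\frac{|(\vV_t)_j-(\vV_{t-1})_j|}{2\nu_{\mathrm{min}}^3}$ (both using Assumption~\ref{assum:low-bd}) reduce matters to controlling the per-step change of $\vV_t$. A direct computation gives $(\vV_t)_j-(\vV_{t-1})_j=\frac{(\vG_t^2)_j}{t}-\frac{1}{t(t-1)}\sum_{i=1}^{t-1}(\vG_i^2)_j$; since $(\vG_t^2)_j\le H_\infty^2$ and $\frac{1}{t(t-1)}\sum_{i=1}^{t-1}(\vG_i^2)_j\le\frac{H_\infty^2}{t}$, we obtain $|(\vV_t)_j-(\vV_{t-1})_j|\le\frac{2H_\infty^2}{t}$. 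Summing over $t$ and using $\sum_{t=2}^T\frac1t\le\log T$ introduces the crucial logarithm: per coordinate the total variation is $\le\frac{H_\infty^2\log T}{\nu_{\mathrm{min}}^3}$, so summing over the $d$ coordinates and reinstating $\alpha$ yields $\EE[\sum_{t=2}^T\|\frac{\alpha}{\sqrt{\vV_t}}-\frac{\alpha}{\sqrt{\vV_{t-1}}}\|_1]\le\frac{\alpha dH_\infty^2\log T}{\nu_{\mathrm{min}}^3}$.

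Finally I would assemble the pieces. Substituting these three bounds, together with the unchanged $C_3\alpha^2T+C_4\alpha+\EE[F(\vW_1)-F^*]$, into \eqref{eq:main-result}, then dividing through by $\frac{\alpha T}{H_\infty}$ and setting $\alpha=\frac{\eta}{\sqrt T}$: the first term contributes the $\frac{\log T}{T}\frac{C_1dH_\infty^3}{\nu_{\mathrm{min}}^3}$ rate, the $C_4$ term contributes $\frac{C_4H_\infty}{T}$, and the remaining three terms each scale like $\frac{1}{\sqrt T}$, reproducing the stated bound. One caveat worth flagging, as noted after Assumption~\ref{assum:low-bd}, is that the raw AdaGrad $\vV_t$ need not satisfy $(\vV_t)_j\ge\nu_{\mathrm{min}}^2$; the argument is therefore understood to run with the stabilized update $\vW_{t+1}=\vW_t-\alpha\vM_t/\sqrt{\vV_t+\nu_{\mathrm{min}}^2}$, under which every estimate above continues to hold.
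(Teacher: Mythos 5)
Your proposal is correct and follows essentially the same route as the paper's proof: the same lower bound on the left-hand side via $(\vV_t)_j\le H_\infty^2$, the same reuse of \eqref{eq:bd-2nd-term-main} for the squared term, and the same per-step estimate $|(\vV_t)_j-(\vV_{t-1})_j|\le 2H_\infty^2/t$ combined with Assumption~\ref{assum:low-bd} to extract the $\log T$ factor from the $\ell_1$ total-variation term. Your added caveat about the stabilized update matches the remark the paper makes after Assumption~\ref{assum:low-bd}.
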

\begin{remark}
In the convergence rate result of Theorem~\ref{thm:AdaGrad}, there are terms involving $\frac{\log T}{T}$, $\frac{1}{T}$, and $\frac{1}{\sqrt{T}}$. When $T$ is sufficiently big, $\frac{1}{\sqrt{T}}$ will dominate both of terms involving $\frac{\log T}{T}$ and $\frac{1}{T}$. Hence, we can also state the result as $\EE \left[\|\nabla F(\vW_{\tau}) \|^2 \right]  = O(\frac{1}{\sqrt{T}})$. This convergence result is again optimal. Notice that in \cite{AdamAnalysis}, the convergence rate of AdaGrad (implied by that of AdaFom) is $O(\frac{\log T}{\sqrt{T}})$ because they adopt a diminishing stepsize instead of a constant stepsize.
\end{remark}

\begin{proof}
    For the AdaGrad optmizer, $\vV_t$ is defined as follows:
\begin{align*}
    \vV_t=\frac{1}{t}\sum_{i=1}^t\vG_i^2.
\end{align*}

To bound the first term in the RHS of \eqref{eq:main-result}, we have
\begin{align*}
   &\, \EE\bigg[\sum_{t=2}^T \left\lVert\frac{\alpha}{\sqrt{\vV_t}}-\frac{\alpha}{\sqrt{\vV_{t-1}}}  \right\rVert_1\bigg]=\alpha\EE\bigg[\sum_{j=1}^d\sum_{t=2}^T \bigg|\frac{1}{(\sqrt{\vV_{t}})_j}-\frac{1}{(\sqrt{\vV_{t-1}})_j}  \bigg|\bigg]\\
    = &\,\alpha\EE\bigg[\sum_{j=1}^d\sum_{t=2}^T \bigg|\frac{(\vV_{t-1})_j-(\vV_{t})_j}{(\sqrt{\vV_{t}})_j(\sqrt{\vV_{t-1}})_j\big((\sqrt{\vV_{t}})_j+(\sqrt{\vV_{t-1}})_j\big)}  \bigg|\bigg]\\
    \leq&\, \frac{\alpha}{2\nu_{\mathrm{min}}^3}\EE\bigg[\sum_{j=1}^d\sum_{t=2}^T \big|(\vV_{t-1})_j-(\vV_{t})_j  \big|\bigg]\\
    = &\,\frac{\alpha}{2\nu_{\mathrm{min}}^3}\EE\bigg[\sum_{j=1}^d\sum_{t=2}^T \bigg|\frac{1}{t-1}\sum_{i=1}^{t-1}(\vG_i)_j^2 -\frac{1}{t}\sum_{i=1}^{t}(\vG_i)_j^2 \bigg|\bigg]\\
    = &\,\frac{\alpha}{2\nu_{\mathrm{min}}^3}\EE\bigg[\sum_{j=1}^d\sum_{t=2}^T \bigg|\frac{1}{t(t-1)}\sum_{i=1}^{t-1}(\vG_i)_j^2 -\frac{1}{t}(\vG_t)_j^2 \bigg|\bigg]\\
    \leq&\, \frac{\alpha}{2\nu_{\mathrm{min}}^3}\EE\bigg[\sum_{j=1}^d\sum_{t=2}^T \bigg|\frac{1}{t(t-1)}\sum_{i=1}^{t-1}(\vG_i)_j^2\bigg| +\bigg|\frac{1}{t}(\vG_t)_j^2 \bigg|\bigg]\\
    \leq &\,\frac{\alpha}{2\nu_{\mathrm{min}}^3}\EE\bigg[\sum_{j=1}^d\sum_{t=2}^T \bigg|\frac{1}{t(t-1)}\sum_{i=1}^{t-1}H_{\infty}^2\bigg| +\bigg|\frac{1}{t}H_{\infty}^2 \bigg|\bigg]\\
    =&\,\frac{H_{\infty}^2\alpha d}{\nu_{\mathrm{min}}^3}\bigg[\sum_{t=2}^T \frac{1}{t}\bigg]\leq\frac{H_{\infty}^2\alpha d}{\nu_{\mathrm{min}}^3}\bigg[\int_1^T \frac{1}{x}\, dx\bigg]=\frac{H_{\infty}^2\alpha d}{\nu_{\mathrm{min}}^3}\log(T).
\end{align*}

For the second term in the RHS of \eqref{eq:main-result}, we use the bound obtained in \eqref{eq:bd-2nd-term-main}.

Hence, 
the right side of \eqref{eq:main-result} can be bounded by

\begin{align*}
   &\EE \left[C_1\sum_{i=2}^T\left\lVert\frac{\alpha}{\sqrt{\vV_i}}-\frac{\alpha}{\sqrt{\vV_{i-1}}} \right\rVert_1 +C_2\sum_{i=2}^T \left\lVert\frac{\alpha}{\sqrt{\vV_i}}-\frac{\alpha}{\sqrt{\vV_{i-1}}} \right\rVert^2\right]\\
&+C_3\alpha^2T+C_4\alpha+\EE[F({\vW}_1)-F^*]\\
    &\leq C_1\frac{H_{\infty}^2\alpha d}{\nu_{\mathrm{min}}^3}\log(T)+C_2 \frac{\alpha^2 d T}{\nu_{\mathrm{min}}^2}+C_3\alpha^2T+C_4\alpha+\EE[F({\vW}_1)-F^*].
\end{align*}

The left side of \eqref{eq:main-result} is lower bounded in \eqref{eq:lw-bd-left-main}. 
Combining this with the inequality above gives 
\begin{align*}
    \frac{\alpha}{H_{\infty}} T\EE \bigg[\|\nabla F(\vW_{\tau}) \|^2 \bigg]\leq C_1\frac{H_{\infty}^2\alpha d}{\nu_{\mathrm{min}}^3}\log(T)+C_2 \frac{\alpha^2 d T}{\nu_{\mathrm{min}}^2}+C_3\alpha^2T+C_4\alpha+\EE[F({\vW}_1)-F^*].
\end{align*}

 Now plugging $\alpha=\frac{\eta}{\sqrt{T}}$ and dividing both sides by $\frac{\alpha}{H_{\infty}} T$ yields the desired result.

\end{proof}


\section{Numerical Experiments}\label{sec:numerical}

\begin{figure}[H]
\setlength\tabcolsep{1pt}
\begin{tabular}{ccc}
\includegraphics[width=0.3\textwidth]{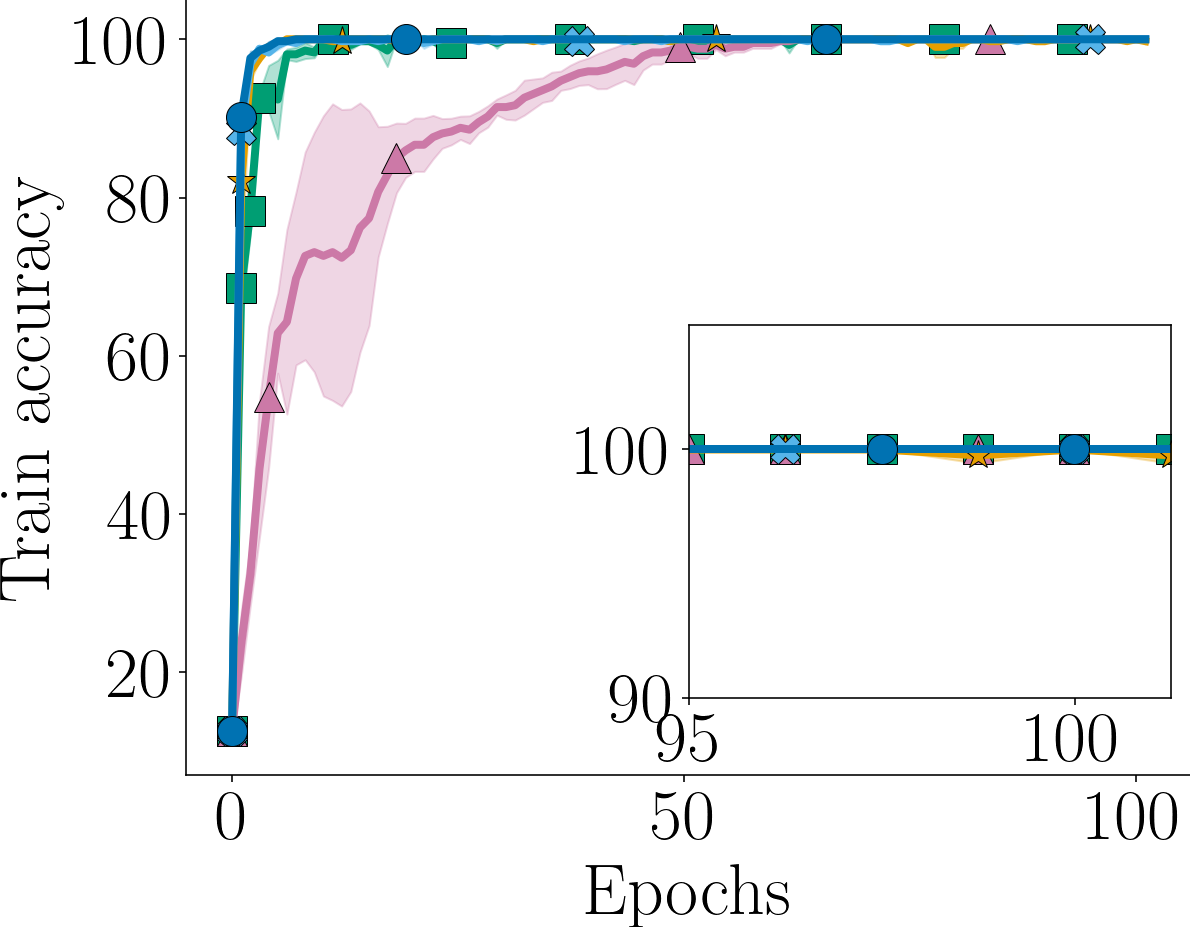} & \includegraphics[width=0.3\textwidth]{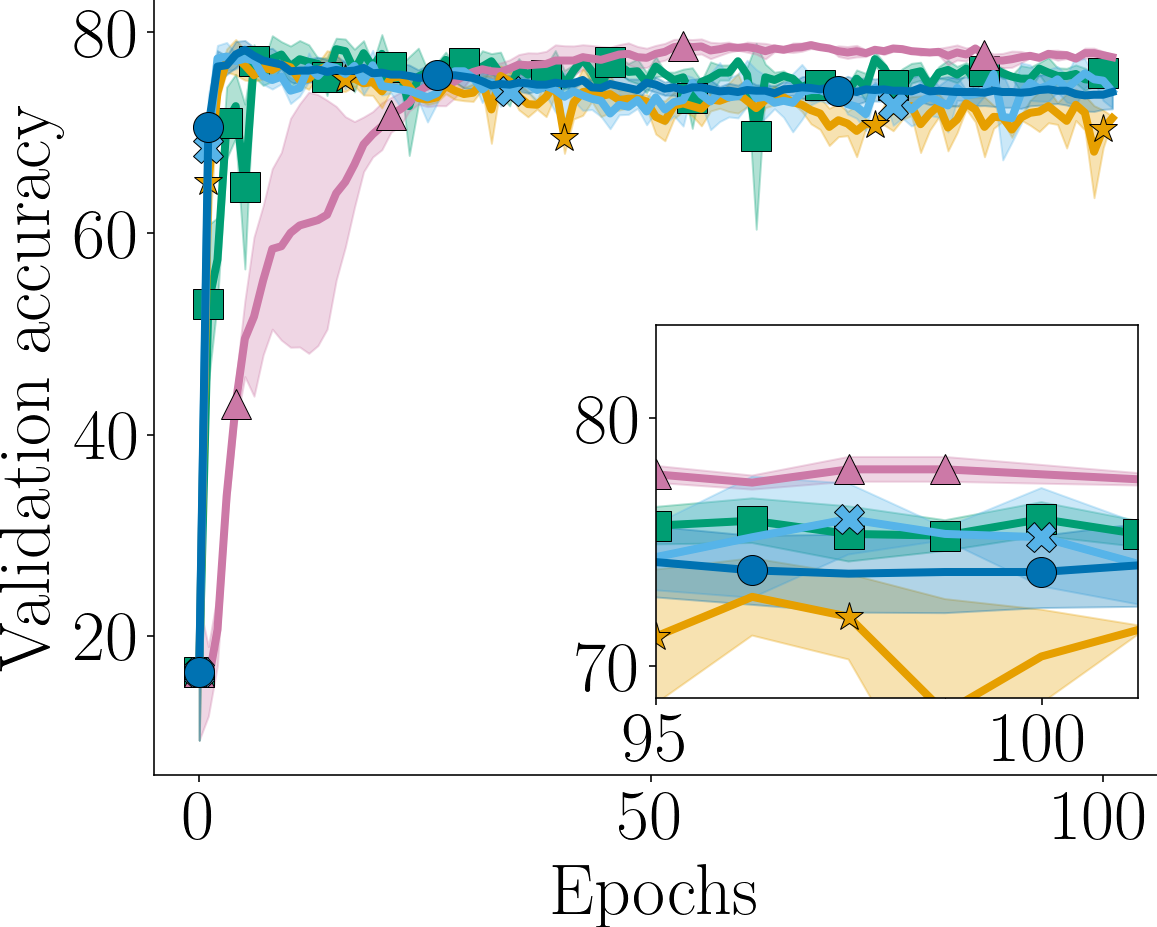} & \includegraphics[width=0.3\textwidth]{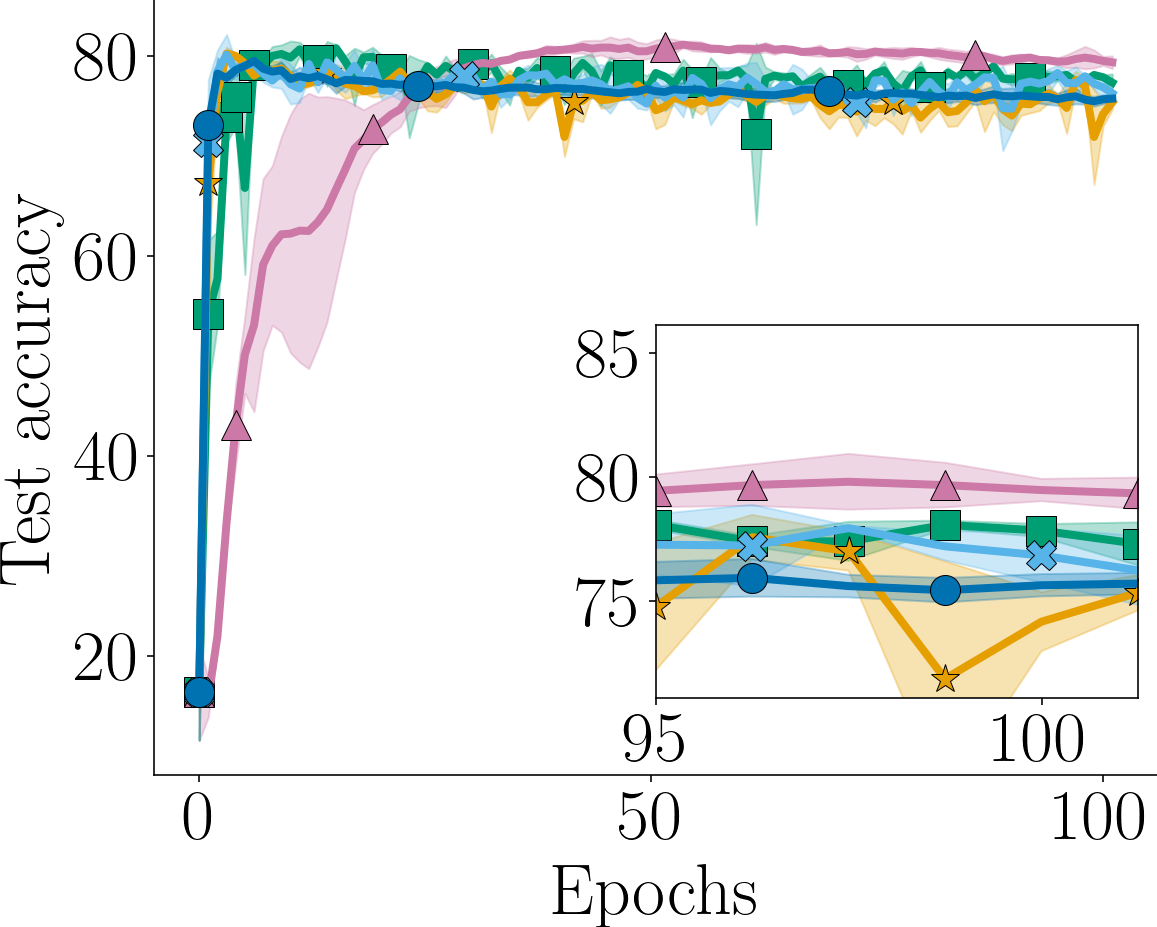}\\ \includegraphics[width=0.3\textwidth]{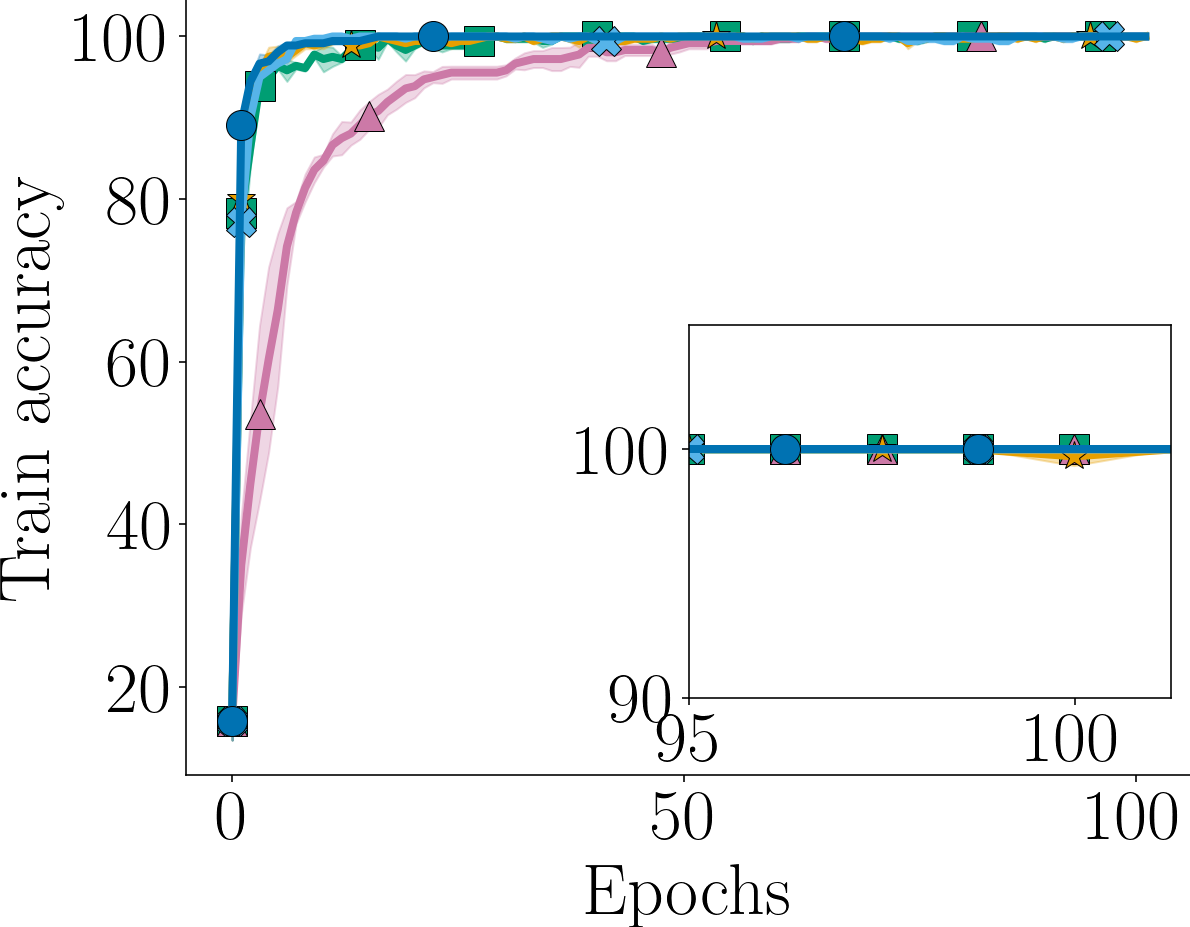} & \includegraphics[width=0.3\textwidth]{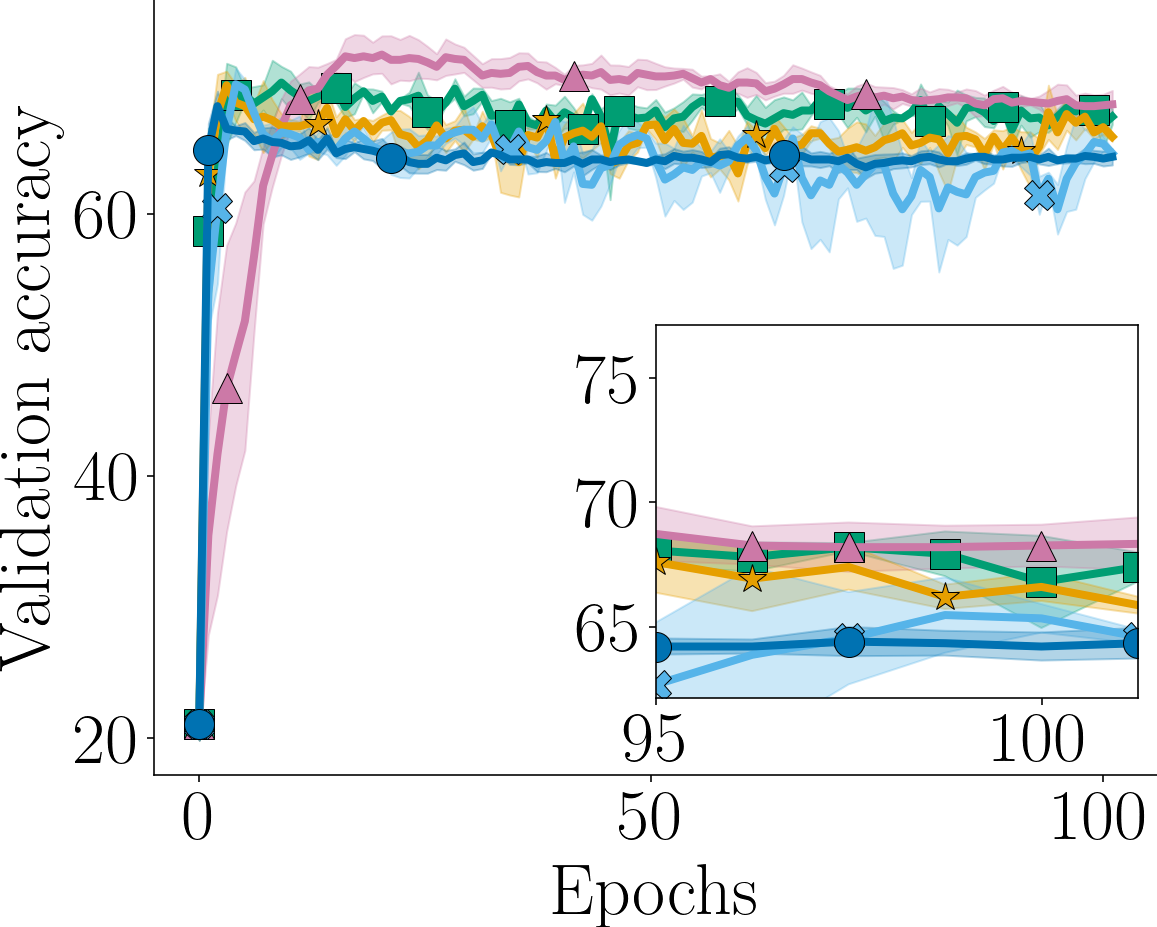} &
\includegraphics[width=0.3\textwidth]{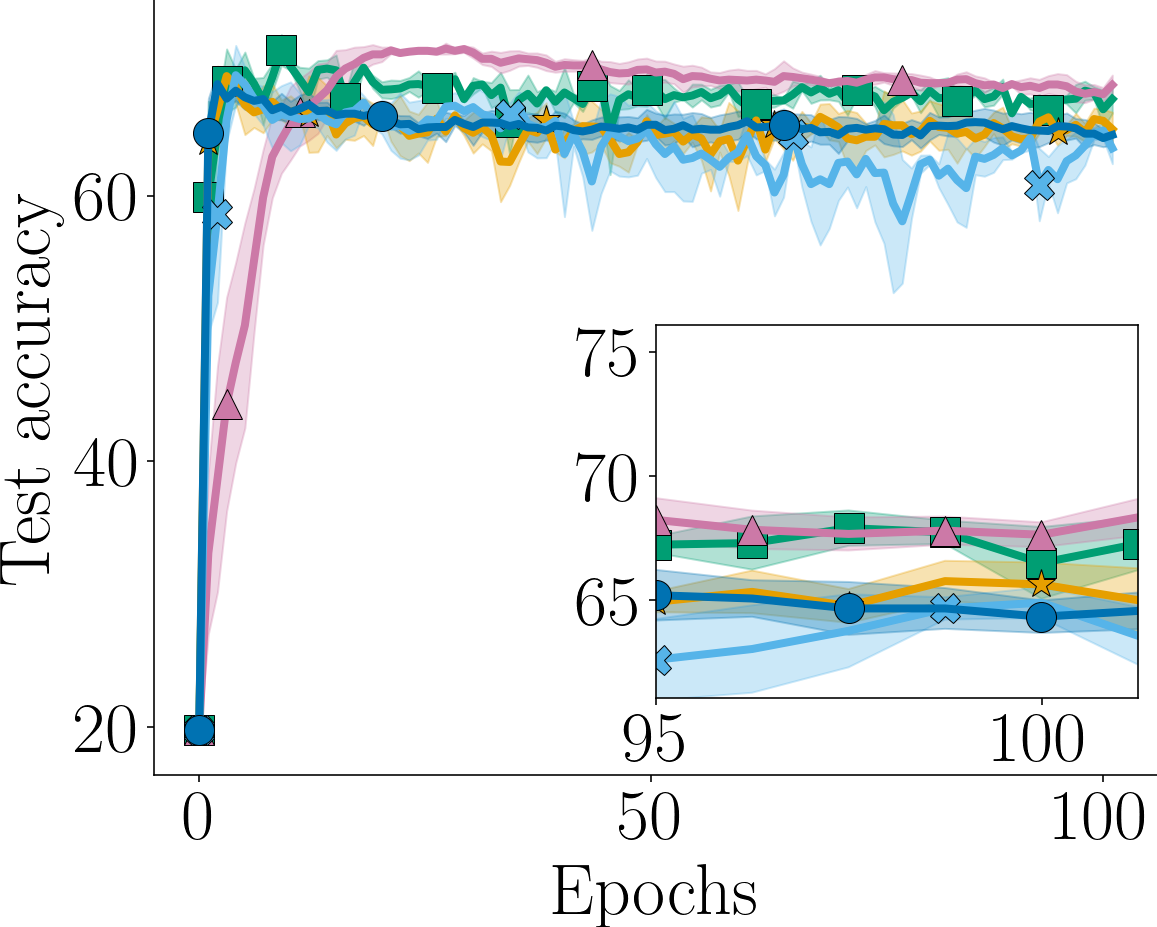}\\ \includegraphics[width=0.3\textwidth]{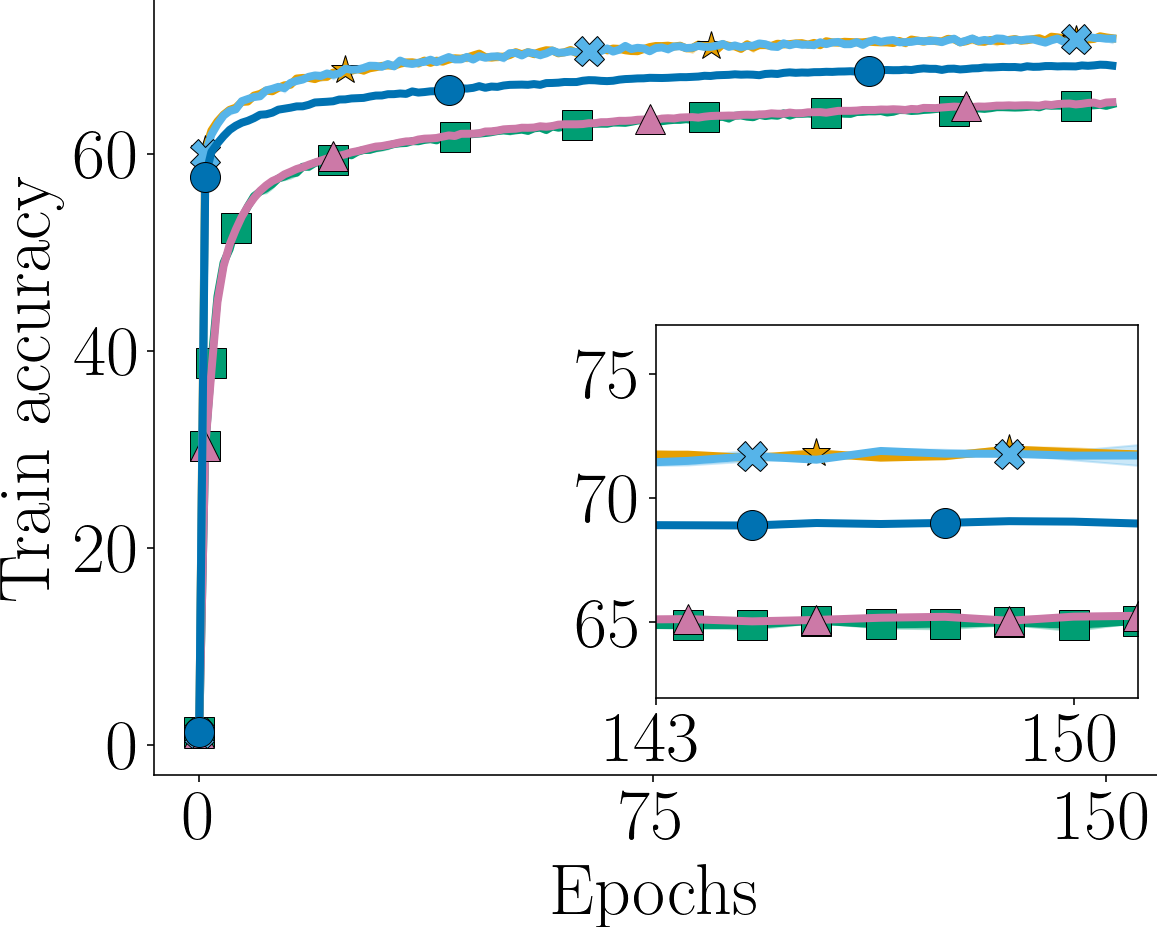} & \includegraphics[width=0.3\textwidth]{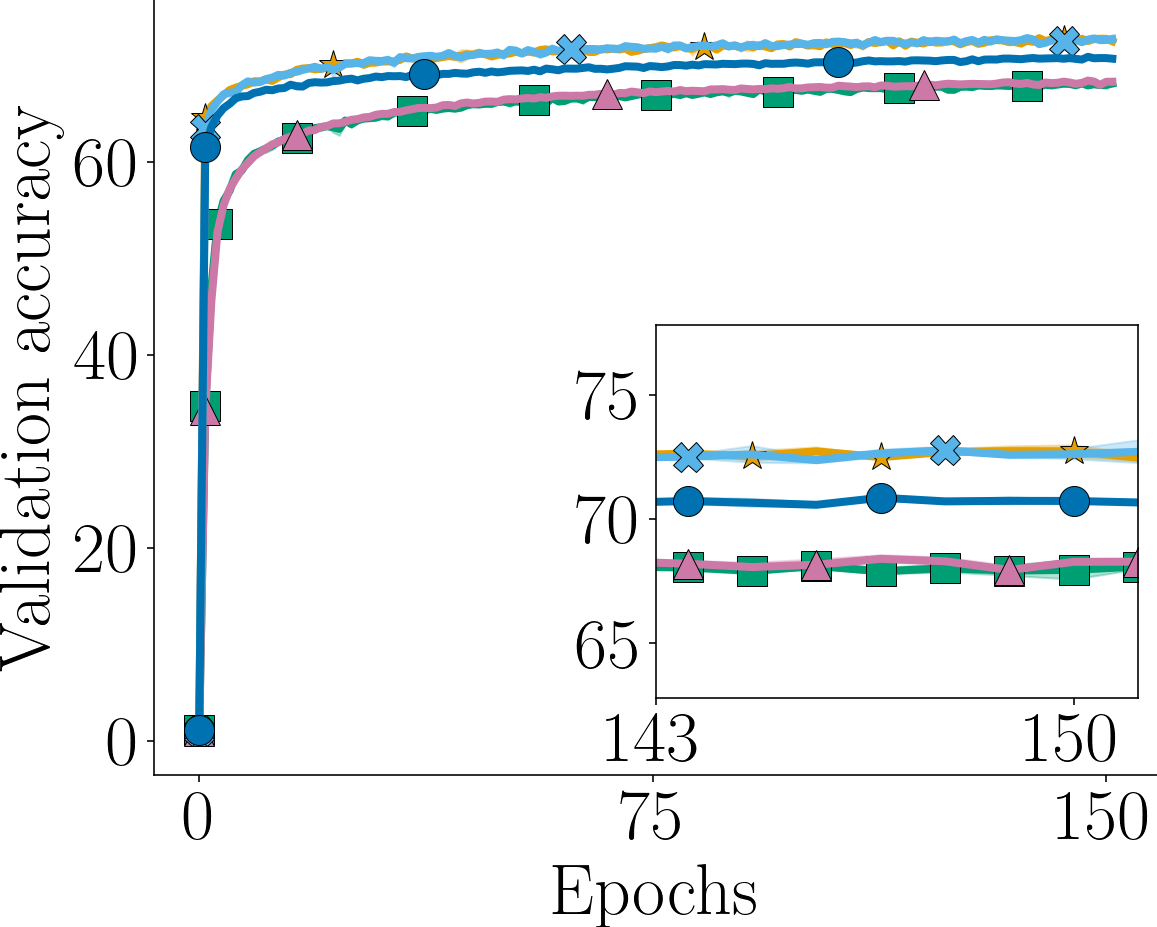} & \includegraphics[width=0.3\textwidth]{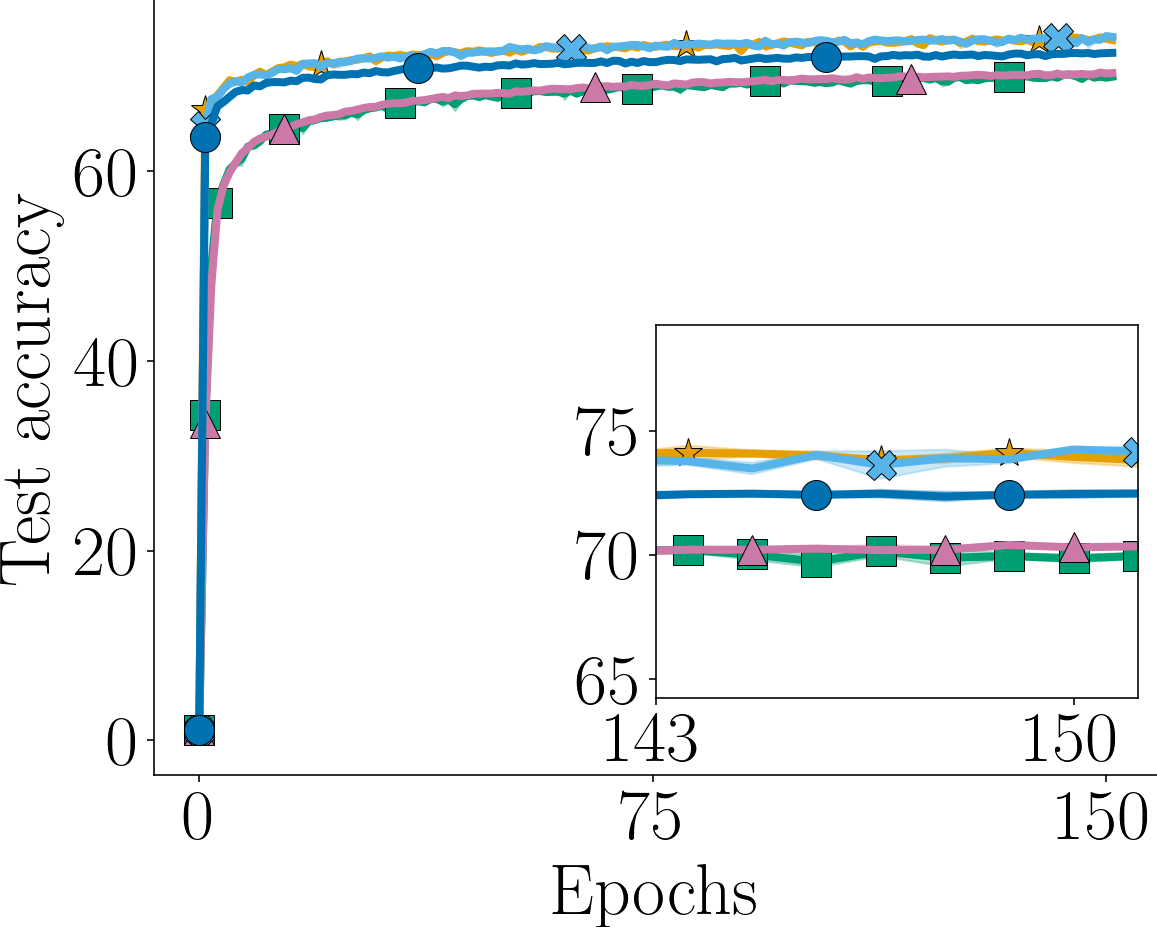}\\ \includegraphics[width=0.3\textwidth]{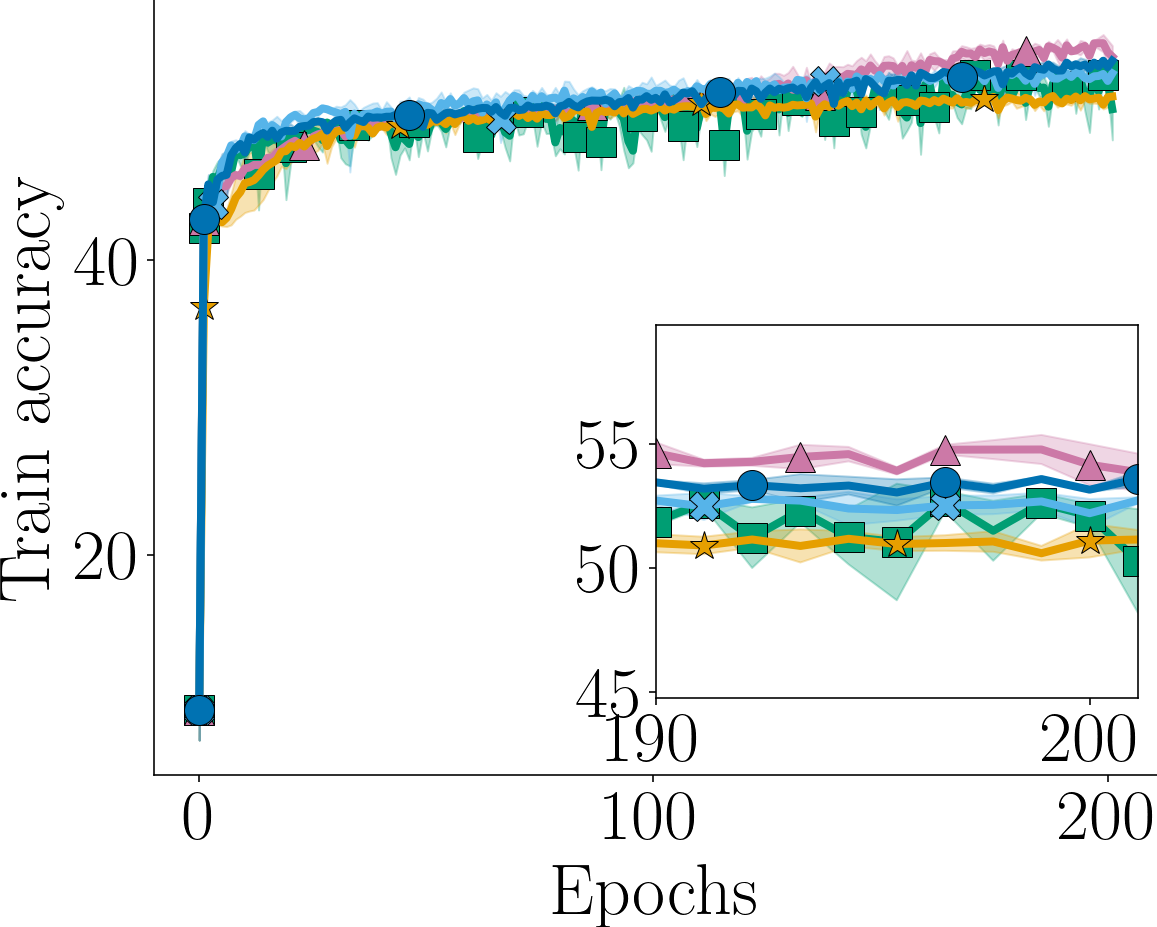} &
\includegraphics[width=0.3\textwidth]{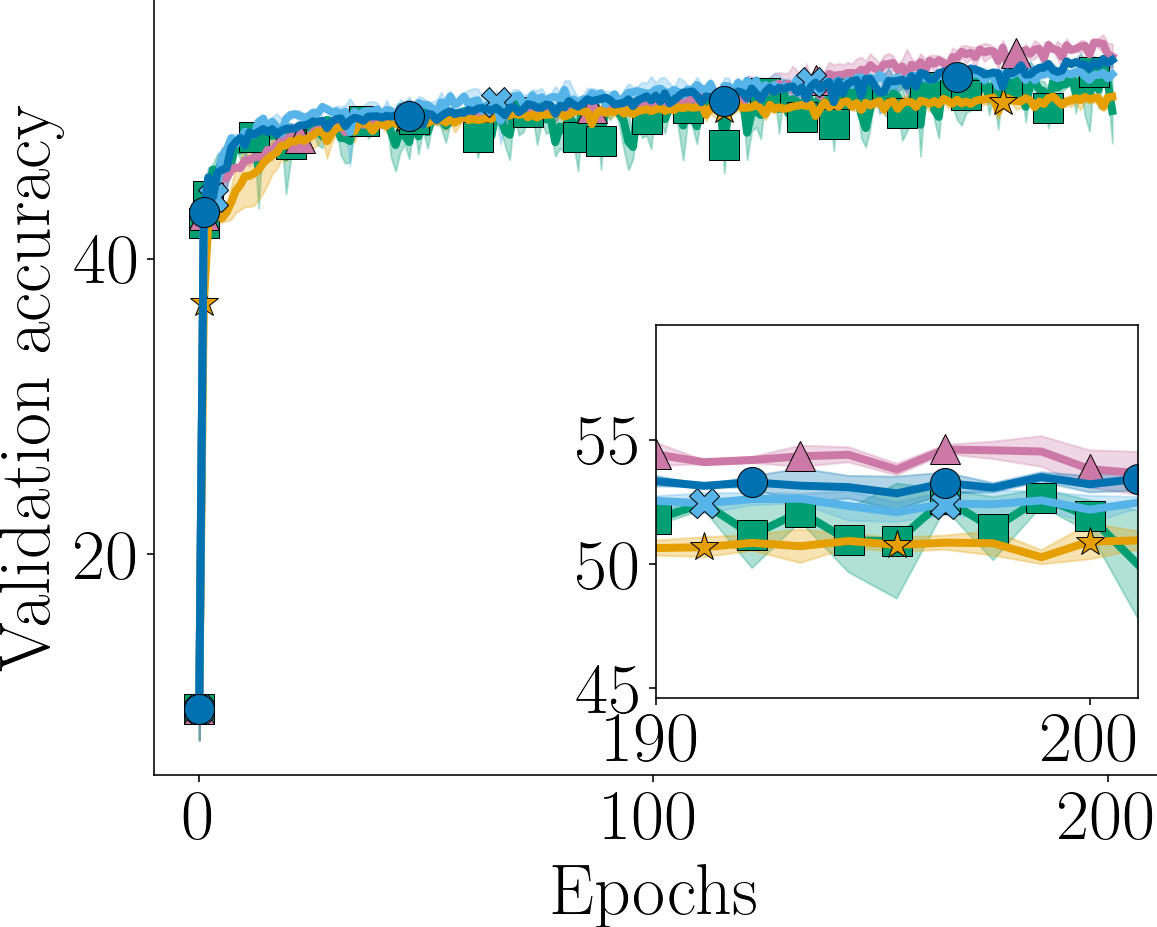} & \includegraphics[width=0.3\textwidth]{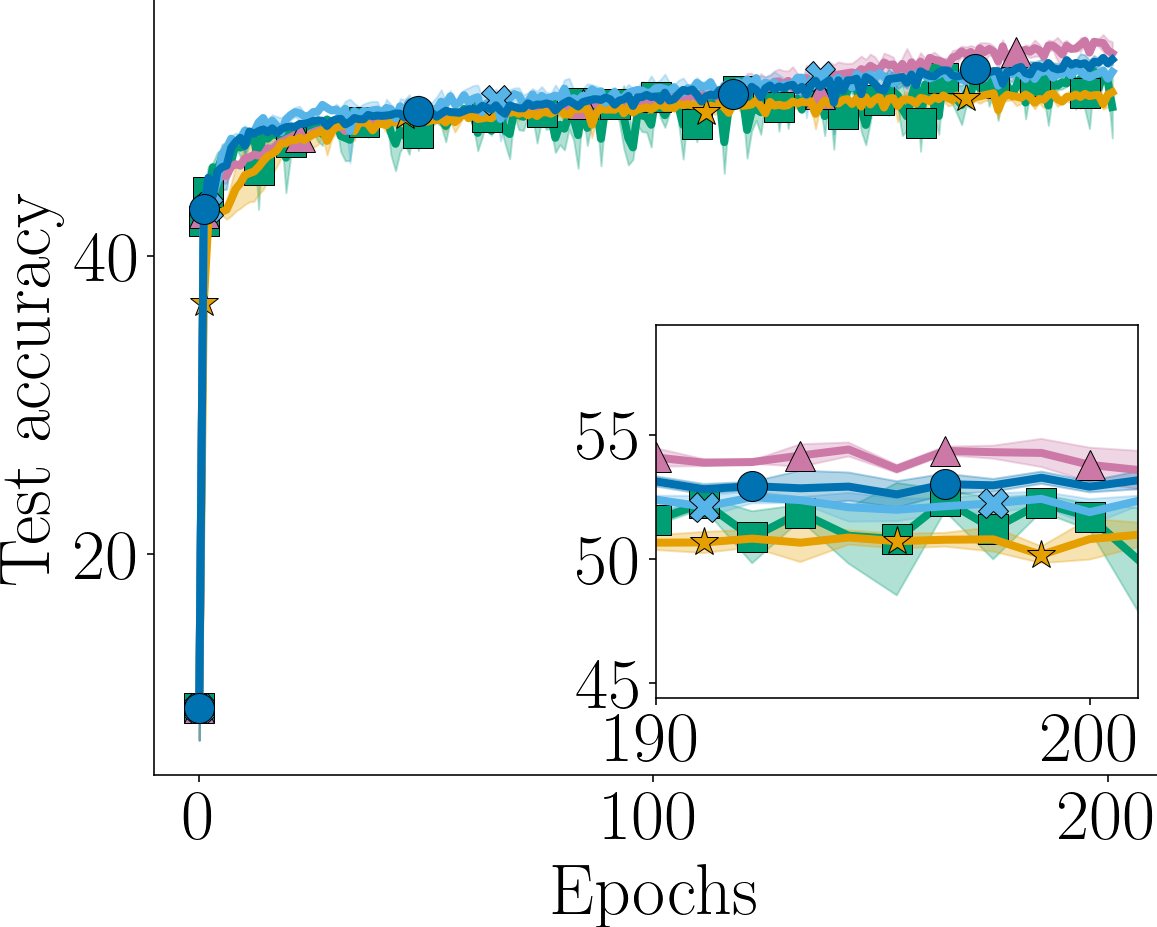} \\ \includegraphics[width=0.3\textwidth]{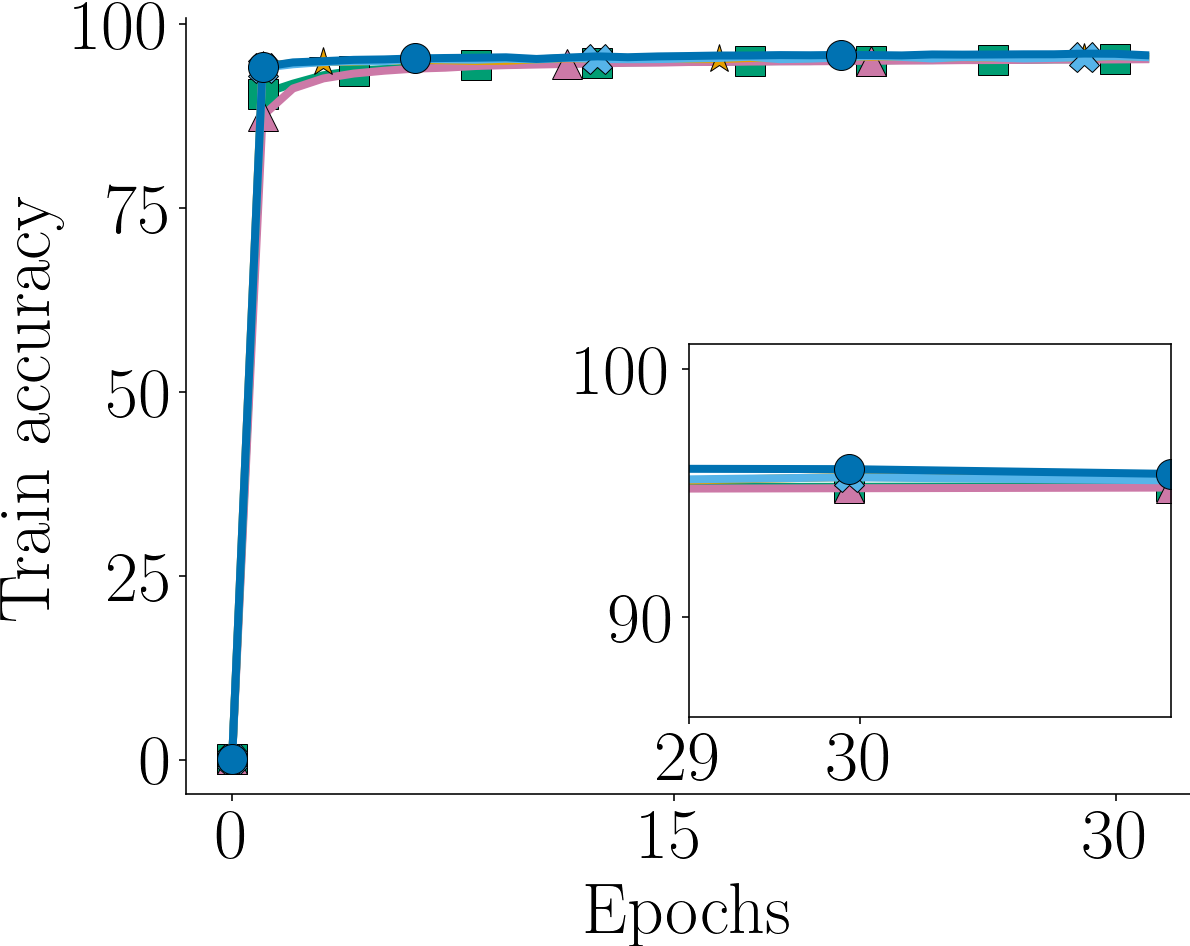} & \includegraphics[width=0.3\textwidth]{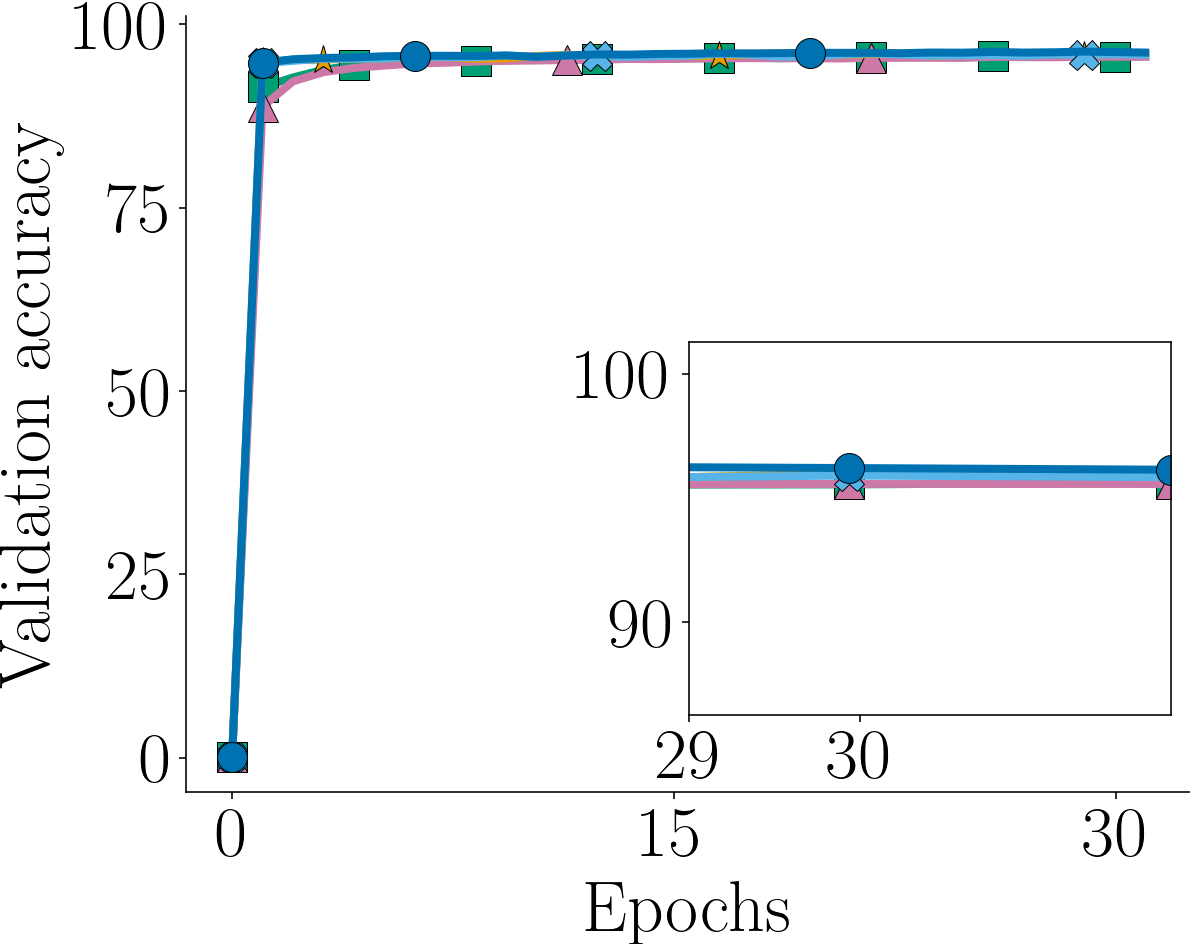} & \includegraphics[width=0.3\textwidth]{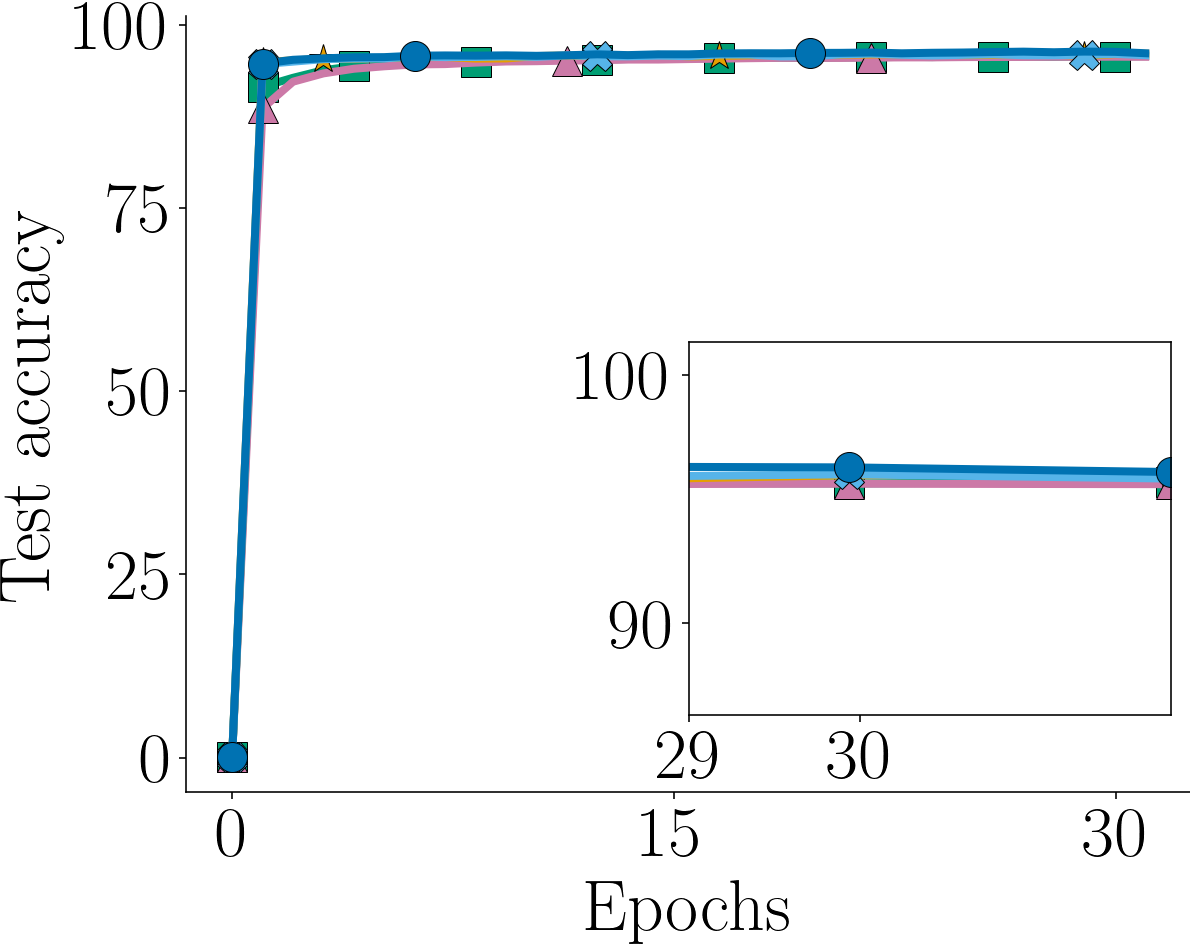} \\
\multicolumn{3}{c}{\includegraphics[width=0.7\textwidth]{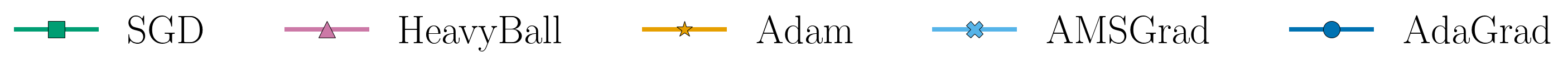}}
\end{tabular}
\caption{Training (first column), validation (second column), and testing (third column) curves for the datasets in Table~\ref{tab:datasets}. Top to bottom, each row corresponds to a dataset: Cora, CiteSeer, ogbn-arxiv, Flickr, Reddit. Here, ``HeavyBall'' refers to Heavy-Ball SGD}
\label{fig:accuracies}
\end{figure}

To test the effectiveness of the different optimizers for the node classification task~\eqref{eq:loss}, we train GCNs by Algorithm~\ref{alg:CV Training} on several benchmark datasets. We compare the five previously defined optimizers: SGD, Heavy-Ball SGD, Adam, AMSGrad, and AdaGrad. The training, validation, and testing results are shown in Figure \ref{fig:accuracies}. Plots were generated using matplotlib. Our code is a modified version of the implementation of VRGCN training\footnote{See: \url{https://github.com/THUDM/CogDL/tree/master/examples}.} which uses the CogDL library \cite{cen2021cogdl} and PyTorch \cite{paszke2019pytorch}. All experiments were performed on an AMD 7413 equipped with 256 GB RAM. \\

\subsection{Datasets}
The datasets we utilize are summarized in Table~\ref{tab:datasets}. The Cora, CiteSeer~\cite{sen2008collective}, and ogbn-arxiv~\cite{ogb-hu2020open} datasets are citation networks where nodes represent papers and edges represent citations; the Flickr~\cite{zeng2019graphsaint} dataset contains nodes which represent images and edges which represent shared properties; the Reddit~\cite{NS_GraphSAGE} dataset is a social network where nodes are Reddit posts and edges represent shared user comments. The node classification task assigns nodes to exactly one of the predetermined list of classes. For more information on how to access the benchmark datasets used in our code, see \url{https://github.com/RPI-OPT/CV-ADAM-GNN} .

\begin{table}[h]
\caption{Dataset summary. The last column denotes the number of classes for each dataset.}\label{tab:datasets}
\begin{tabular}{@{}lllll@{}}
\toprule
        Dataset & Nodes  & Features & Edges & Class\\
        \midrule
         ogbn-arxiv& 169,343& 128& 1,335,586& 40\\ 
 Flickr& 89,250& 500& 989,006& 7\\ 
 Reddit& 232,965   & 602    &  114,848,857   &  41\\
  Cora & 2,708  &1,433& 13,264 & 7 \\ 
  CiteSeer& 3,327& 3,703& 12,431& 6\\
\botrule
\end{tabular}
\end{table}

\subsection{Hyperparameter Tuning}

We train $K=2$ layer GCNs for all numerical experiments. We set the momentum parameters to $\beta_1=0.9$ and $\beta_2=0.999$ for both Adam and AMSGrad, and the momentum parameter for Heavy-Ball SGD is set to $\beta_1=0.9$. For each dataset, the hidden dimension, weight decay, dropout, number of epochs, and number of runs for each experiment were fixed, see Table~\ref{tab:fixed hyperparams}.

The following hyperparameters were tuned in our numerical experiments: learning rate, sampled neighbors, and batch size. Sampled neighbors is the number of sampled neighbors at each layer \footnote{Note that in our implementation, the neighbor sampling is performed with replacement, with duplicates removed. Therefore the set number of sampled neighbors serves as an upper bound for the number of sampled neighbors.}, corresponding to $D^{(k)}$, which remains constant for all layers. 

Batch size is the size of the batch of sampled training nodes $\mathcal{V}_t$ at each iteration.
We used grid-search to test the hyperparameter values given in Table~\ref{tab:hyperparam ranges}. Due to the large size of the Reddit dataset, fewer hyperparameter experiments were performed.
The specific hyperparameter settings for each plot, as well as different hyperparameter values used in the experiments, can be found in Appendix~\ref{appendixB1}.

\begin{table}[h]
\caption{Hyperparameter values for grid search tuning for each dataset.}\label{tab:hyperparam ranges}%
\begin{tabular}{@{}llll@{}}
\toprule
Dataset & Learning rate  & Sampled neighbors  & Batch size\\
\midrule
ogbn-arxiv  & \{.001,.005,.01 \}  & \{2,5 \}  &  \{1000,2048,5000 \} \\
Flickr  &\{.1,.5,.8 \}   & \{2,5 \}  &  \{1000,2000,5000 \} \\
Reddit  & \{.01,.05 \}  & \{2 \}  &  \{1000 \} \\
Cora  &\{.001,.01,.05\}   &\{2,5 \}   & \{10,20,50 \}  \\
CiteSeer  &\{.001,.01,.05\}   &\{2,5 \}   & \{ 10,20,50\}  \\
\botrule
\end{tabular}
\end{table}

\begin{table}[h]
\caption{Fixed hyperparameter settings for each dataset.}\label{tab:fixed hyperparams}%
\begin{tabular}{@{}llllll@{}}
\toprule
Dataset& Hidden Dimension  & Weight Decay  & Dropout&Epochs&Runs\\
\midrule
ogbn-arxiv  &256   &.00001   &0  & 150&3\\
Flickr  & 256  &0   &.2  &200&3 \\
Reddit  &128   &0   & 0 & 30&1\\
Cora  &32   &.0005   & .5 &100 &3\\
CiteSeer  &32   &.0005   &.5  &100 &3 \\
\botrule
\end{tabular}
\end{table}

\subsection{Results}

The hyperparameters used for each curve in the train, test, and validation accuracy plots is the set of hyperparameters that produced the highest maximum test accuracy over an average of 3 runs (1 run for Reddit) for that particular optimizer and dataset. The corresponding plots of training, validation, and test accuracy are shown in Figure~\ref{fig:accuracies} and the corresponding maximum test accuracies are shown in Table \ref{tab:test_accs}. The highest maximum average test accuracy for each dataset is written in bold while the second highest maximum average test accuracy is underlined.
The Heavy-Ball SGD algorithm gave the highest test accuracy for the Flickr, Cora, and CiteSeer datasets, which contain the least number of nodes.
For the two larger datasets, the highest test accuracy for ogbn-arxiv was achieved with the AMSGrad optimizer while Adam achieved the second highest accuracy; for Reddit the AdaGrad optimizer achieved the highest test accuracy while AMSGrad achieved the second highest accuracy. In all cases, we find the addition of momentum or adaptivity greatly boosts performance over vanilla SGD.

\begin{table}[h]
\caption{Maximum average test accuracy (\%) from 3 independent trials (1 trial for Reddit). \textbf{Bold} indicates the best result for a given dataset while \underline{underlined} indicates the second best.}\label{tab:test_accs}%
\begin{tabular}{@{}llllll@{}}
        \toprule
        Dataset&Adam &Heavy-Ball& AMSGrad & AdaGrad & SGD\\
        \midrule
        ogbn-arxiv&\underline{74.11} &70.40 &\textbf{74.24} &72.47 &70.19 \\ 
    Flickr&50.97 & \textbf{54.41}& 52.49&\underline{53.27} &52.35\\
    Reddit&96.01 &95.62 &\underline{96.03} &\textbf{96.32} &95.70 \\ 
    Cora&80.20 &\textbf{81.10} &80.10 & 79.47&\underline{80.73}\\
    CiteSeer&69.00 &\textbf{71.07} &69.07 &68.40 &\underline{70.97}\\
        \bottomrule
    \end{tabular}
\end{table}

\section{Conclusion}\label{sec:conclusion}

We have presented novel methods for training GCNs that combine the CVE technique and various Adam-type optimizers. We prove a general result of CVE-based Adam-type updates for solving  nonconvex GCN training problems. Unlike previous works on stochastic gradient-type methods, our result does not rely on the assumption of  unbiasedness of the used stochastic gradients, thanks to the CVE technique. We prove optimal convergence rates for a few specific settings of the Adam-type update: AMSGrad, AdaGrad, and Heavy-Ball SGD. Finally, we provide numerical results that compare the performance of different Adam-type optimizers for our method on training GCNs by using five benchmark datasets. The results demonstrate the superiority of the momentum or adaptive methods over the classic SGD on training GCNs, in particular on large graph datasets.

\section*{Acknowledgements} This work is partly supported by NSF grant DMS-2406896.








\newpage

\begin{appendices}

\section{Proofs of Lemmas and Theorems}\label{appendixA1}
In this section, we provide details proofs of all theoretical results.

\subsection{Proof of Theorem~\ref{thm:main_result}}
We first establish several lemmas, which will be used to prove Theorem~\ref{thm:main_result}.

\begin{lemma}\label{lem:tilde-w-diff}
Let $\vW_{1}=\vW_0$. Define the sequence $\xt$ as follows:
\begin{align}
    \xt=\wt+\bfracConst(\vW_t-\vW_{t-1}),\;\;\; \forall t\geq 1. \label{eq:9}
\end{align}
\text{Then}
\begin{align*}
\xtn-\xt&=-\bfracConst \bigg(\frac{\at}{\sv}-\frac{\atp}{\svp} \bigg)\odot \mtp-\at\gt/\sv, \;\;\; \forall t\geq 1.
\end{align*}
\end{lemma}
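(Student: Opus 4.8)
The statement is an algebraic identity relating the increments of the auxiliary sequence $\{\xt\}$ to the raw parameter updates of Algorithm~\ref{alg:CV Training}, so my plan is to verify it by direct substitution; there is no real analytic content, and the only care needed is in handling the momentum recursion and the $t=1$ boundary. The auxiliary sequence $\{\xt\}$ is the standard momentum-absorbing reformulation, and this lemma is the first computational step toward the descent estimate of Theorem~\ref{thm:main_result}.

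First I would expand $\xtn-\xt$ straight from the definition \eqref{eq:9}, writing $\xtn=\wtn+\bfracConst(\wtn-\wt)$ and $\xt=\wt+\bfracConst(\wt-\wtp)$. Collecting terms, the increment $(\wtn-\wt)$ acquires the coefficient $1+\bfracConst=\frac{1}{1-\beta_1}$, so that
\[
\xtn-\xt=\frac{1}{1-\beta_1}(\wtn-\wt)-\bfracConst(\wt-\wtp).
\]
Next I would substitute the $\vW$-update $\wtn-\wt=-\at\mt/\sv$ together with $\wt-\wtp=-\atp\mtp/\svp$, and then expand $\mt=\beta_1\mtp+(1-\beta_1)\gt$ inside the first term. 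The factor $\frac{1}{1-\beta_1}$ multiplying $(1-\beta_1)\gt/\sv$ collapses to give the clean term $-\at\gt/\sv$, while the remaining $\beta_1\mtp/\sv$ piece combines with the $\svp$-term coming from $(\wt-\wtp)$. Grouping these two $\mtp$ contributions into a single Hadamard product yields exactly $-\bfracConst\big(\frac{\at}{\sv}-\frac{\atp}{\svp}\big)\odot\mtp$, which is the claimed right-hand side.

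The one place to be careful — rather than a genuine obstacle — is the base case $t=1$. Here $\vW_1=\vW_0$ gives $\wt-\wtp=\vzero$ directly, so I would invoke this equality rather than the update formula and thereby avoid any division by an undefined $\sqrt{\vV_0}$; moreover $\vM_0=\vzero$ makes the $\mtp$-term on the right-hand side vanish, and both sides reduce to $-\at\gt/\sv$. Hence the identity holds for all $t\geq 1$. The payoff of the lemma is that the momentum has now been folded entirely into $\{\xt\}$, whose increments are governed by the gradient step $-\at\gt/\sv$ plus an error term controlled by the change in effective stepsize $\frac{\alpha}{\sqrt{\vV_t}}-\frac{\alpha}{\sqrt{\vV_{t-1}}}$ — precisely the quantities that appear on the right-hand side of \eqref{eq:main-result}.
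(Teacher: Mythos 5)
Your proposal is correct and follows essentially the same route as the paper: both are direct algebraic verifications that substitute the update $\vW_{t+1}-\vW_t=-\alpha\vM_t/\sqrt{\vV_t}$, expand $\vM_t=\beta_1\vM_{t-1}+(1-\beta_1)\vG_t$, and use $-\alpha\vM_{t-1}=\sqrt{\vV_{t-1}}(\vW_t-\vW_{t-1})$ to combine the two $\vM_{t-1}$ contributions into the stepsize-difference term. Your handling of the $t=1$ boundary (invoking $\vW_1=\vW_0$ and $\vM_0=\vzero$ directly rather than dividing by an undefined $\sqrt{\vV_0}$) is a small point of care that the paper's proof glosses over, but the argument is otherwise identical.
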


\begin{proof}

For $t\geq 1$, using the update defined in Algorithm~\ref{alg:CV Training}, we have
    \begin{align}
        \wtn-\wt&=-\at\mt/\sv \notag \\
        &=-\at(\beta_1\mtp+(1-\beta_1)\gt)/\sv \notag\\
        &=-\at\beta_1\mtp/\sv-\at(1-\beta_1)\gt/\sv.\label{eq:l1_1}
    \end{align}
Again by the update rule, $-\alpha \vM_{t-1}=\sqrt{\vV_{t-1}}(\vW_t-\vW_{t-1})$. Therefore, \eqref{eq:l1_1} can be rewritten as 
    \begin{align}
        \wtn-\wt&=\beta_1\frac{\svp}{\sv}\odot(\wt-\wtp)-\at(1-\beta_1)\gt/\sv \notag\\
        &=\beta_1(\wt-\wtp)+\beta_1\bigg(\frac{\svp}{\sv}-1\bigg)\odot (\wt-\wtp)-\at(1-\beta_1)\gt/\sv \notag\\
        &=\beta_1(\wt-\wtp)+\beta_1\bigg(\frac{\svp}{\sv}-1\bigg)\odot(-\atp\mtp/\svp)-\at(1-\beta_1)\gt/\sv \notag\\
        &=\beta_1(\wt-\wtp)-\beta_1\bigg(\frac{\at}{\sv}-\frac{\atp}{\svp}\bigg)\odot \mtp-\at(1-\beta_1)\gt/\sv.
        \label{eq:10}
    \end{align}
Using the fact that $\wtn-\wt=(1-\beta_1)\wtn+\beta_1(\wtn-\wt)-(1-\beta_1)\wt$ and \eqref{eq:10}, we have
\begin{align}
    (1-\beta_1)\wtn+\beta_1(\wtn-\wt)&=\wtn-\wt+(1-\beta_1)\wt \notag\\
    &=(1-\beta_1)\wt+\beta_1(\wt-\wtp) \notag\\
    &-\beta_1\bigg( \frac{\at}{\sv}-\frac{\atp}{\svp} \bigg)\odot \mtp -\at(1-\beta_1)\gt/\sv \;\; \label{eq:l1_2}.
\end{align}
Dividing both sides of \eqref{eq:l1_2} by $(1-\beta_1)$ gives
\begin{align}
    \wtn+\bfracConst(\wtn-\wt)&=\wt+\bfracConst(\wt-\wtp) \notag\\
    &-\bfracConst\bigg( \frac{\at}{\sv}-\frac{\atp}{\svp} \bigg)\odot \mtp -\at\gt/\sv \;\;.\label{eq:11}
\end{align}
By the definition of $\xt$ in \eqref{eq:9},  
then \eqref{eq:11} can be written as
\begin{align*}
    \xtn&=\xt-\bfracConst \bigg( \frac{\at}{\sv}-\frac{\atp}{\svp} \bigg)\odot \mtp -\at\gt/\sv \;\;\; ,\forall t\geq 1.
\end{align*}
This completes the proof.
\end{proof}

\begin{lemma}\label{lem:bound-obj-chg}
Let $\xt$ be defined in \eqref{eq:9}, then
    \begin{align}
            \EE[F(\xtn)-F(\widetilde{\vW}_1)]\leq \sum_{i=1}^4 \mathrm{Term}_i \label{eq:13}
    \end{align}
where
\begin{align}
    \mathrm{Term}_1&=-\EE\bigg[\sumi \bigg\langle \gfx ,\bfracConst \bigg(\frac{\ai}{\svi}-\frac{\aip}{\svip} \bigg)\odot \mip  \bigg\rangle  \bigg]         \label{eq:14} \\
    \mathrm{Term}_2&= -\EE\bigg[\sumi \ai\langle \gfx ,\gi/\svi \rangle  \bigg]       \label{eq:15} \\
    \mathrm{Term}_3&=  \EE \left[\sumi \rho \left\lVert\bfracConst \bigg( \frac{\ai}{\svi}-\frac{\aip}{\svip}\bigg)\odot \mip \right\rVert^2 \right]     \label{eq:18} \\
    \mathrm{Term}_4&=    \EE \bigg[\sumi \rho \|\ai \gi/\svi\|^2\bigg]    \label{eq:19} 
\end{align}
\end{lemma}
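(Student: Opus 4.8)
The plan is to derive \eqref{eq:13} from a one-step descent inequality applied to the auxiliary sequence $\{\xix\}$ of \eqref{eq:9}, followed by telescoping and taking expectations. Since $F$ has a $\rho$-Lipschitz gradient by Assumption~\ref{assum:smooth}, the standard quadratic upper bound gives, for each $i\geq 1$,
\begin{align*}
F(\xin)\leq F(\xix)+\big\langle \gfx,\,\xin-\xix\big\rangle+\frac{\rho}{2}\big\|\xin-\xix\big\|^2.
\end{align*}
I would then substitute the exact displacement from Lemma~\ref{lem:tilde-w-diff},
\begin{align*}
\xin-\xix=-\bfracConst\Big(\frac{\ai}{\svi}-\frac{\aip}{\svip}\Big)\odot\mip-\ai\gi/\svi,
\end{align*}
into both the linear and the quadratic term on the right-hand side.

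For the inner-product term, linearity splits it exactly into $-\big\langle\gfx,\bfracConst(\ai/\svi-\aip/\svip)\odot\mip\big\rangle$ and $-\ai\langle\gfx,\gi/\svi\rangle$; summing over $i=1,\dots,t$ and taking expectation produces $\mathrm{Term}_1$ and $\mathrm{Term}_2$ as defined in \eqref{eq:14}--\eqref{eq:15}. For the quadratic term, I would apply the elementary bound $\|x+y\|^2\leq 2\|x\|^2+2\|y\|^2$ to the two summands of $\xin-\xix$; this absorbs the leading factor $\frac{\rho}{2}$ into $\rho$ and yields precisely $\rho\,\|\bfracConst(\ai/\svi-\aip/\svip)\odot\mip\|^2$ and $\rho\,\|\ai\gi/\svi\|^2$, matching $\mathrm{Term}_3$ and $\mathrm{Term}_4$ in \eqref{eq:18}--\eqref{eq:19}.

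The final step is to sum the one-step inequality over $i=1,\dots,t$, so that the left-hand side telescopes to $F(\xtn)-F(\widetilde{\vW}_1)$, and then take expectation to arrive at \eqref{eq:13}. No conditional-expectation argument or bias bound is needed at this stage: the expectation is taken only once at the very end, and the bias estimate \eqref{eq:lemma6} enters later when bounding $\mathrm{Term}_2$. I expect no genuine obstacle here; the argument is a routine descent-lemma computation, and the only point requiring care is the careful bookkeeping of signs and the factor-of-two split that upgrades $\frac{\rho}{2}$ to $\rho$, so that all four terms are recovered with exactly the stated constants.
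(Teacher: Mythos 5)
Your proposal is correct and follows essentially the same route as the paper's proof: apply the descent lemma to $\{\xix\}$, substitute the displacement from Lemma~\ref{lem:tilde-w-diff}, split the linear term into $\mathrm{Term}_1$ and $\mathrm{Term}_2$, and use $\|x+y\|^2\leq 2\|x\|^2+2\|y\|^2$ to turn $\frac{\rho}{2}$ into $\rho$ for $\mathrm{Term}_3$ and $\mathrm{Term}_4$ before telescoping and taking expectations. No gaps.
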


\begin{proof}
    By the Lipschitz continuity of $\nabla F$, we have:
    \begin{align}
        F(\widetilde{\vW}_{t+1})\leq  F(\widetilde{\vW}_t)+\langle \nabla F(\widetilde{\vW}_t),\Delta\widetilde{\vW}_t \rangle + \frac{\rho}{2}\|\Delta\widetilde{\vW}_t \|^2 \label{eq:20}
    \end{align}
    where $\Delta\widetilde{\vW}_t=\xtn-\xt$. Using Lemma~\ref{lem:tilde-w-diff}, we have
    \begin{align}
        \Delta\widetilde{\vW}_t=-\bfracConst \bigg(\frac{\at}{\sv}-\frac{\atp}{\svp}  \bigg)\odot \mtp  
        -\at\gt/\sv  \;\;\; \forall t\geq 1 .\label{eq:21}
    \end{align}
Combining \eqref{eq:20} and \eqref{eq:21}, we have
\begin{align}
    \EE[F(\xtn)-F(\widetilde{\vW}_1)]&=\EE\bigg[\sumi F(\xin)-F(\xix)\bigg] \notag \\
    &\leq \EE\bigg[\sumi\langle \nabla F(\widetilde{\vW}_i),\Delta\widetilde{\vW}_i \rangle + \frac{\rho}{2}\|\Delta\widetilde{\vW}_i \|^2\bigg] \notag \\
    &=-\EE \bigg[\sumi \bigg\langle \gfx, \bfracConst \bigg(\frac{\ai}{
    \svi}-\frac{\aip}{\svip}  \bigg)\odot \mip \bigg\rangle   \bigg] \notag \\
    &- \EE \bigg[\sumi \ai \langle \gfx, \gi/\svi \rangle   \bigg] \notag 
    +\EE \bigg[\sumi \frac{\rho}{2} \|\Delta\widetilde{\vW}_i \|^2 \bigg] \notag \\
    &=\mathrm{Term}_1+\mathrm{Term}_2+ \EE \bigg[\sumi \frac{\rho}{2} \|\Delta\widetilde{\vW}_i \|^2  \bigg] \;\;.\label{eq:22}
\end{align}
Using $\|a+b \|^2 \leq 2 \| a\|^2+2 \| b\|^2$ and \eqref{eq:21}, we obtain
\begin{align*}
    \EE \bigg[\sumi \frac{\rho}{2} \|\Delta\widetilde{\vW}_i \|^2\bigg]= &\frac{\rho}{2}\EE \bigg[\sumi\left\lVert  -\bfracConst\bigg(\frac{\ai}{\svi}-\frac{\aip}{\svip}  \bigg)\odot \mip -\frac{\ai\gi}{\svi}  \right\rVert^2 \bigg]\\
    \leq & \frac{\rho}{2}\EE \bigg[\sumi 2\left\lVert \bfracConst\bigg(\frac{\ai}{\svi}-\frac{\aip}{\svip}  \bigg)\odot \mip \right\rVert^2 \bigg]\\
    &+\frac{\rho}{2}\EE \bigg[\sumi 2\|\ai\gi/\svi \|^2 \bigg]\\
    &=\mathrm{Term}_3+\mathrm{Term}_4.
\end{align*}
Substituting this inequality into \eqref{eq:22} results in \eqref{eq:13}.
\end{proof}

\begin{lemma}\label{lem:bd-term1}
Under Assumptions~\ref{assum:smooth}-\ref{assum:bound-grad}, $\mathrm{Term}_1$ in \eqref{eq:14} is bounded as:
    \begin{align*}
    \mathrm{Term}_1
    &\leq \xu{H_{\infty}^2} \frac{\beta_1}{1-\beta_1} \EE \bigg[\sum_{i=2}^t \sum_{j=1}^d \bigg|\bigg(\frac{\ai}{\svi}-\frac{\aip}{\svip}\bigg)_j\bigg|\bigg].
    \end{align*}
\end{lemma}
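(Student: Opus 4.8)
The plan is to prove this by a purely componentwise estimate, treating the parameter set $\vW$ as a long vector of dimension $d$ as in the Notation subsection. First I would observe that Algorithm~\ref{alg:CV Training} initializes $\vM_0=\mathbf{0}$, so the $i=1$ summand in \eqref{eq:14} carries the factor $\vM_0=\mathbf{0}$ and therefore vanishes; this is exactly why the claimed bound runs only over $i=2,\ldots,t$. Hence I may restrict attention to $i\ge 2$ from the outset.

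Next I would expand the inner product appearing in each summand over the $d$ coordinates and move the negative sign and the triangle inequality inside, writing
$$
-\bigg\langle \gfx,\, \bfracConst\bigg(\frac{\ai}{\svi}-\frac{\aip}{\svip}\bigg)\odot\mip\bigg\rangle
\le \bfracConst\sum_{j=1}^d \big|(\gfx)_j\big|\,\bigg|\bigg(\frac{\ai}{\svi}-\frac{\aip}{\svip}\bigg)_j\bigg|\,\big|(\mip)_j\big|.
$$
The two estimates driving the result are uniform $\ell_\infty$ bounds on the gradient factor and the momentum factor. For the gradient I would invoke Assumption~\ref{assum:bound-grad} to get $\big|(\gfx)_j\big|\le\|\gfx\|_\infty\le H_{\infty}$. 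For the momentum factor I would establish $\|\mip\|_\infty\le H_{\infty}$ by a one-line induction on the recursion $\mt=\beta_1\mtp+(1-\beta_1)\gt$: the base case is $\vM_0=\mathbf{0}$, and the inductive step uses $\|\gt\|_\infty\le H_{\infty}$ together with the convex weighting, since $\|\mt\|_\infty\le\beta_1\|\mtp\|_\infty+(1-\beta_1)\|\gt\|_\infty\le H_{\infty}$. Substituting both bounds into the expanded sum and pulling the constants $H_{\infty}^2$ and $\bfracConst$ outside yields precisely the claimed inequality after summing over $i$ and taking expectations.

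The computation itself is routine; the only pieces needing care are the momentum boundedness induction and the bookkeeping of the vanishing $i=1$ term. The one genuine subtlety I would want to confirm is that the $\ell_\infty$ gradient bound in Assumption~\ref{assum:bound-grad} applies at the auxiliary point $\widetilde{\vW}_i$ from \eqref{eq:9} rather than at the iterate $\vW_i$. Note that $\widetilde{\vW}_i=\vW_i+\bfracConst(\vW_i-\vW_{i-1})$ is an \emph{extrapolation}, not a convex combination of iterates, so the bound on $\|\nabla F(\vW_t)\|_\infty$ does not automatically transfer. If Assumption~\ref{assum:bound-grad} is understood as a uniform bound on $\|\nabla F\|_\infty$ over the region visited by the analysis, this step is immediate; otherwise I would first control $\|\nabla F(\widetilde{\vW}_i)\|_\infty$ by relating $\widetilde{\vW}_i$ to $\vW_i$ through the $\rho$-Lipschitz gradient of Assumption~\ref{assum:smooth} and the stepsize-scale of $\vW_i-\vW_{i-1}$. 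I expect this to be the main (and essentially the only) point of friction, with the remainder being a direct componentwise estimate.
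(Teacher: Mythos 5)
Your proposal is correct and follows essentially the same route as the paper: the paper's proof applies H\"older's inequality $|\langle a,b\rangle|\le\|a\|_\infty\|b\|_1$ together with the same induction showing $\|\vM_{i-1}\|_\infty\le H_\infty$, which is just the packaged form of your componentwise expansion. The subtlety you flag about applying the gradient bound at the extrapolated point $\widetilde{\vW}_i$ rather than at $\vW_i$ is real, but the paper's own proof silently uses $\|\nabla F(\widetilde{\vW}_i)\|_\infty\le H_\infty$ without further justification, so your reading of Assumption~\ref{assum:bound-grad} as a uniform bound over the region visited by the analysis is the intended one.
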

\begin{proof}
    Since $\|\gt \|\leq H_F$, from the update rule for $\mt$ in Algorithm~\ref{alg:CV Training}, it follows that $\|\mt\|\leq H_F$, which is proved by induction as follows.
    Since $\vM_0=0$, $\| \vM_0\|\leq H_F$.
        By the update rule for $\mt$ in Algorithm~\ref{alg:CV Training}, we have $\mt=\beta_1\mtp+(1-\beta_1)\gt$. Suppose $\|\mtp \|\leq H_F$. Then:
        \begin{align}\label{eq:bd-mt-F}
            \|\mt \|&= \| \beta_1\mtp+(1-\beta_1)\gt \|\leq \beta_1\|\mtp \|+(1-\beta_1)\|\gt\|\leq H_F .
        \end{align}
  
     Similarly, one can show 
     \begin{equation}\label{eq:bd-mt-inf}
     \| \mt\|_{\infty}\leq H_{\infty}.
     \end{equation}
     Then we bound $\mathrm{Term}_1$ as follows:
    \begin{align*}
       \mathrm{Term}_1&=-\EE\bigg[\sumi \bigg\langle \gfx ,\bfracConst \bigg(\frac{\ai}{\svi}-\frac{\aip}{\svip} \bigg)\odot \mip  \bigg\rangle  \bigg]  \\
       &=-\EE\bigg[\sum_{i=2}^t \bigg\langle \gfx ,\bfracConst \bigg(\frac{\ai}{\svi}-\frac{\aip}{\svip} \bigg)\odot \mip  \bigg\rangle  \bigg]  \\
       &\leq \frac{\beta_1}{1-\beta_1} \EE\bigg[\sum_{i=2}^t \| \gfx \|_\infty \bigg\| \bigg(\frac{\ai}{\svi}-\frac{\aip}{\svip} \bigg)\odot \mip \bigg\|_1  \bigg]  \\
       &\leq H_{\infty}^2\frac{\beta_1}{1-\beta_1}\EE\bigg[\sum_{i=2}^t \sum_{j=1}^d \bigg|\bigg(\frac{\ai}{\svi}-\frac{\aip}{\svip}\bigg)_j\bigg| \bigg]\\
    \end{align*}
where the second equality follows from $\vM_0=0$, the first inequality is by the Cauchy-Schwarz inequality, and the last inequality is from \eqref{eq:bd-mt-inf}.    
\end{proof}

\begin{lemma}\label{lem:bd-term3}
    Under Assumptions~\ref{assum:smooth}-\ref{assum:bound-grad}, $\mathrm{Term}_3$ in \eqref{eq:18} is bounded as:
    \begin{align*}
        \mathrm{Term}_3\leq \rho\bigg(\frac{\beta_1}{1-\beta_1} \bigg)^2H_{\infty}^2\EE \bigg[\sum_{i=2}^t\sum_{j=1}^d \bigg(\frac{\ai}{\svi}-\frac{\aip}{\svip} \bigg)_j^2   \bigg]
    \end{align*}
\end{lemma}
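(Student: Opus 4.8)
The plan is to follow the same template as the proof of Lemma~\ref{lem:bd-term1}, since $\mathrm{Term}_3$ has exactly the same structure as $\mathrm{Term}_1$ but with a squared Frobenius norm in place of an inner product. First I would pull the scalar factor $\bfracConst$ out of the squared norm. Because it sits \emph{inside} $\|\cdot\|^2$, it must come out squared, giving
\[
\mathrm{Term}_3 = \rho\Big(\tfrac{\beta_1}{1-\beta_1}\Big)^2 \EE\Big[\sumi \big\lVert \big(\tfrac{\ai}{\svi}-\tfrac{\aip}{\svip}\big)\odot\mip\big\rVert^2\Big].
\]
Next I would expand each squared Frobenius norm componentwise, using the long-vector format with index $j$ ranging over all $d$ entries, so that the Hadamard product separates into
\[
\big\lVert \big(\tfrac{\ai}{\svi}-\tfrac{\aip}{\svip}\big)\odot\mip\big\rVert^2 = \sum_{j=1}^d \big(\tfrac{\ai}{\svi}-\tfrac{\aip}{\svip}\big)_j^2\,(\mip)_j^2.
\]

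The only substantive ingredient is the uniform bound $\|\mip\|_\infty \le H_\infty$, i.e.\ \eqref{eq:bd-mt-inf}, which was already established by induction (from $\|\vG_t\|_\infty \le H_\infty$ and the momentum recursion $\mt = \beta_1\mtp + (1-\beta_1)\gt$) inside the proof of Lemma~\ref{lem:bd-term1}. Invoking it gives $(\mip)_j^2 \le H_\infty^2$ for every $j$, so each summand is dominated by $H_\infty^2 \sum_{j=1}^d \big(\tfrac{\ai}{\svi}-\tfrac{\aip}{\svip}\big)_j^2$. Finally, since $\vM_0 = \vzero$ by the initialization in Algorithm~\ref{alg:CV Training}, the $i=1$ term vanishes and the outer sum may start at $i=2$; collecting everything produces the claimed bound.

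I do not expect a genuine obstacle here — the argument is a short, routine estimate. The only points requiring care are bookkeeping ones: remembering to \emph{square} the constant $\frac{\beta_1}{1-\beta_1}$ when it leaves the squared norm, correctly factoring the Hadamard product into a componentwise sum indexed by $j$, and noting that the $\ell_\infty$ control on $\mip$ (rather than any new estimate) is what lets us replace $(\mip)_j^2$ by $H_\infty^2$ uniformly. This is the step on which the whole bound rests, and it is borrowed directly from the earlier lemma.
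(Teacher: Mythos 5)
Your argument is correct and coincides step for step with the paper's proof: factor out $\big(\tfrac{\beta_1}{1-\beta_1}\big)^2$, expand the squared Frobenius norm componentwise, drop the $i=1$ term via $\vM_0=\vzero$, and bound $(\mip)_j^2$ by $H_\infty^2$ using \eqref{eq:bd-mt-inf}. Nothing is missing.
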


\begin{proof}
It holds that
    \begin{align*}
        \mathrm{Term}_3&=\EE \bigg[\sumi  \rho\left\lVert\bfracConst \bigg( \frac{\ai}{\svi}-\frac{\aip}{\svip}\bigg)\odot \mip \right\rVert^2 \bigg]  \\
        &= \rho \EE \bigg[\sum_{i=2}^t \bigg( \frac{\beta_1}{1-\beta_1} \bigg)^2 \sum_{j=1}^d\bigg( \frac{\ai}{\svi}-\frac{\aip}{\svip}\bigg)^2_j (\mip)_j^2  \bigg]  \\
        &\leq\rho \bigg(\frac{\beta_1}{1-\beta_1} \bigg)^2H_{\infty}^2\EE \bigg[\sum_{i=2}^t\sum_{j=1}^d \bigg(\frac{\ai}{\svi}-\frac{\aip}{\svip} \bigg)_j^2   \bigg]
    \end{align*}
where the second equality follows from $\vM_0=0$, and the inequality holds by \eqref{eq:bd-mt-inf}.    
\end{proof}
\begin{lemma}\label{lem:bd-term4}
Under Assumptions~\ref{assum:smooth}-\ref{assum:bound-grad}, $\mathrm{Term}_4$ in \eqref{eq:19} is bounded as:
\begin{align}
    \mathrm{Term}_4\leq \frac{\rho\alpha^2}{\nu_{\mathrm{min}}^2}H_F^2 t .
\end{align}
\end{lemma}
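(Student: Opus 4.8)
The plan is to bound each summand of $\mathrm{Term}_4$ by a deterministic constant that is independent of the randomness, and then simply add up the $t$ terms. Because the per-term bound I will derive holds surely (not just in expectation), the outer expectation in \eqref{eq:19} plays no essential role and can be dropped at the very end.

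First I would expand the Frobenius norm componentwise, writing
$$\|\ai \gi/\svi\|^2 = \alpha^2 \sum_{j=1}^d \frac{(\vG_i)_j^2}{(\vV_i)_j}.$$
Next, I would invoke Assumption~\ref{assum:low-bd}, which guarantees $(\vV_i)_j \geq \nu_{\mathrm{min}}^2$ for every coordinate $j$, to replace each denominator by its lower bound, giving
$$\|\ai \gi/\svi\|^2 \leq \frac{\alpha^2}{\nu_{\mathrm{min}}^2} \sum_{j=1}^d (\vG_i)_j^2 = \frac{\alpha^2}{\nu_{\mathrm{min}}^2} \|\vG_i\|^2.$$
I would then apply the stochastic-gradient bound $\|\vG_i\| \leq H_F$ from Assumption~\ref{assum:bound-grad}, obtaining the uniform estimate $\|\ai \gi/\svi\|^2 \leq \alpha^2 H_F^2/\nu_{\mathrm{min}}^2$. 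Summing this over $i=1,\ldots,t$, multiplying by $\rho$, and taking expectation of the resulting constant then yields the stated bound $\rho \alpha^2 H_F^2 t/\nu_{\mathrm{min}}^2$.

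The truthful assessment is that there is essentially no obstacle here: the result is an immediate consequence of the uniform lower bound on the entries of $\vV_i$ (Assumption~\ref{assum:low-bd}) and the uniform upper bound on the stochastic gradient (Assumption~\ref{assum:bound-grad}). The only point worth care is the order of operations—applying the coordinatewise lower bound on $\vV_i$ \emph{before} recombining the numerators into $\|\vG_i\|^2$—so that the two assumptions merge cleanly into the factor $\|\vG_i\|^2/\nu_{\mathrm{min}}^2$ without needing any Cauchy--Schwarz or variance-type argument. Since the bound is deterministic once the two assumptions are in force, the expectation is vacuous and simply passes through.
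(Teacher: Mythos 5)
Your proposal is correct and follows essentially the same route as the paper: both expand $\|\alpha\vG_i/\sqrt{\vV_i}\|^2$, apply the entrywise lower bound $(\vV_i)_j\geq\nu_{\mathrm{min}}^2$ from Assumption~\ref{assum:low-bd} to pull out the factor $\alpha^2/\nu_{\mathrm{min}}^2$, and then use $\|\vG_i\|\leq H_F$ from Assumption~\ref{assum:bound-grad} before summing over $i$. Your observation that the bound is deterministic and the expectation passes through trivially matches the paper's (terse) treatment.
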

\begin{proof}
    This term can simply be bounded by using Assumptions~\ref{assum:low-bd} and \ref{assum:bound-grad} as follows 
    \begin{align*}
     \mathrm{Term}_4=\EE \bigg[\sum_{i=1}^t \rho\|\alpha \vG_i/\sqrt{\vV_i} \|^2   \bigg] \leq \frac{\rho\alpha^2}{\nu_{\mathrm{min}}^2} \EE \bigg[\sum_{i=1}^t\| \vG_i \|^2\bigg]\leq  \frac{\rho\alpha^2}{\nu_{\mathrm{min}}^2}H_F^2 t, 
    \end{align*}
 which gives the desired result.   
\end{proof}

To bound $\mathrm{Term}_2$ in \eqref{eq:15}, we need the following lemma.
\begin{lemma}\label{lem:delta_bound}
Under Assumption~\ref{assum:bound-grad}, let $\bm{\delta}_i=\vG_i-\nabla F(\vW_i)$, then
$$\|\EE_{\xi}[\bm{\delta}_i] \|_{\infty}\leq C\frac{\alpha}{\nu_{\mathrm{min}}}H_{\infty} ,$$ 
where $C$ is a universal constant, and 
    $\xi$ is the random variable accounting for all the randomness in $\widehat{\vP}^{(k)},k=1,\ldots,K$,  $\cV_t$,
    and neighbor sampling at each layer. 
\end{lemma}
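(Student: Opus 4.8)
The plan is to reduce the bias of the stochastic gradient to the bias of the CVE feature maps and then track the latter layer by layer. Since $\vG_t$ is obtained by backpropagation through the approximate representations $\{\vH^{(k)}\}$ while $\nabla F(\vW_t)$ is the same computation through the exact representations $\{\vH^{(k)}_{\mathrm{exact}}\}$ (both evaluated at the current weights $\vW_t$), and since the loss $f$ together with the maps in \eqref{eq:CV1}--\eqref{eq:CV2} have bounded, Lipschitz derivatives under Assumptions~\ref{assum:smooth} and~\ref{assum:bound-grad}, it suffices to bound $\|\EE_\xi[\vH^{(k)}]-\vH^{(k)}_{\mathrm{exact}}\|_\infty$ for each $k$ and then absorb the factors (norms of $\vP$, of $\vW^{(k)}$, the Lipschitz constant of $\sigma$, and the number of layers $K$) into the constant $C$.

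First I would isolate the one exact property the control variate buys us. Writing $\EE_{\xi_k}[\cdot]$ for the expectation over the layer-$k$ neighbor sampling given $\vH^{(k)}$ and $\overline{\vH}^{(k)}$, the matrix $\widehat{\vP}^{(k)}$ is unbiased for $\vP$ (as noted after \eqref{eq:hat-vP}), so the bracketed term in \eqref{eq:CV1} satisfies $\EE_{\xi_k}[\widehat{\vP}^{(k)}\Delta\vH^{(k)}+\vP\overline{\vH}^{(k)}]=\vP\vH^{(k)}$; that is, the pre-activation $\vZ^{(k+1)}$ is conditionally unbiased for $\vP\vH^{(k)}\vW^{(k)}$. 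Consequently the only sources of bias in $\vH^{(k+1)}=\sigma(\vZ^{(k+1)})$ are (i) the Jensen gap $\EE_{\xi_k}[\sigma(\vZ^{(k+1)})]-\sigma(\EE_{\xi_k}[\vZ^{(k+1)}])$ from the nonlinearity, and (ii) the bias already carried by $\vH^{(k)}$ relative to $\vH^{(k)}_{\mathrm{exact}}$. This sets up an induction on $k$ with base case $\vH^{(0)}=\vX=\vH^{(0)}_{\mathrm{exact}}$; in fact $\Delta\vH^{(0)}=\vzero$, so layer $1$ is exact.

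The heart of the argument is controlling the Jensen gap (i). For an $L$-Lipschitz $\sigma$ one has $\|\EE[\sigma(\vZ)]-\sigma(\EE\vZ)\|\le L\,\EE\|\vZ-\EE\vZ\|\le L\sqrt{\mathrm{Var}(\vZ)}$, and the control variate makes the zero-mean part of $\vZ^{(k+1)}$ equal to $(\widehat{\vP}^{(k)}-\vP)\Delta\vH^{(k)}\vW^{(k)}$, so the gap is proportional to $\|\Delta\vH^{(k)}\|=\|\vH^{(k)}-\overline{\vH}^{(k)}\|$. I would then bound this staleness term by the change in the weights since $\overline{\vH}^{(k)}$ was last refreshed: by Lipschitz dependence of the feature map on $\vW$, $\|\Delta\vH^{(k)}\|$ is controlled by the weight displacement, and the per-iteration displacement is $\|\vW_{t+1}-\vW_t\|_\infty=\alpha\|\vM_t/\sqrt{\vV_t}\|_\infty\le \alpha H_\infty/\nu_{\mathrm{min}}$ by Assumption~\ref{assum:low-bd} together with $\|\vM_t\|_\infty\le H_\infty$ established in \eqref{eq:bd-mt-inf}. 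Combining the layer-wise estimates through the induction and passing to the $\ell_\infty$ norm yields the claimed $C\alpha H_\infty/\nu_{\mathrm{min}}$.

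The main obstacle is precisely this staleness control: because $\overline{\vh}^{(k)}_v$ is refreshed only when $v$ re-enters the receptive field, $\Delta\vH^{(k)}$ reflects the accumulated weight change over a possibly long, sampling-dependent window rather than a single step. Arguing that it nonetheless scales like the per-step displacement $O(\alpha)$, rather than the cumulative displacement, is the crux, and it is exactly the content of the result of \cite{VRGCN} that I would invoke to package this bound; the remaining estimates (the Jensen gap, Lipschitz propagation through $\sigma$ and through the loss backpropagation, and the weight-displacement bound) are routine given Assumptions~\ref{assum:smooth}--\ref{assum:bound-grad}.
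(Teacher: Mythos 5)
Your proposal is correct and follows essentially the same route as the paper: the crux in both is to invoke the staleness-bias result of \cite{VRGCN} (their Lemma 2 / Theorem 2), which bounds $\|\EE_{\xi}[\bm{\delta}_i]\|_{\infty}$ by a universal constant times the maximum weight displacement over a bounded window, and then to supply the Adam-specific per-step bound $\|\vW_{j+1}-\vW_j\|_{\infty}=\alpha\|\vM_j/\sqrt{\vV_j}\|_{\infty}\le \alpha H_{\infty}/\nu_{\mathrm{min}}$, summed over the at most $KJ$ iterations between refreshes of a stale feature. The only difference is cosmetic: you sketch the internals of the cited lemma (unbiasedness of $\widehat{\vP}^{(k)}$, the Jensen gap, the layerwise induction), whereas the paper treats it as a black box.
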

\begin{proof}
     This can be done using Lemma 2 from \cite{VRGCN}, which states:
$$\|\EE_{\xi}[\bm{\delta}_i] \|_{\infty}\leq RQ   $$ where $R$ is a universal constant and $Q$ is defined as follows: $$\|\vW_i-\vW_j \|_{\infty}\leq Q\;\;\forall i,j.$$ Let $J$ be the number of iterations per epoch of training. To find the value of $Q$ for this problem setting, we follow the proof of Theorem 2 in \cite{VRGCN} and have
\begin{align*}
    \max_{i-KJ\leq j, k\leq i} \|\vW_j-\vW_{k} \|_{\infty}&\leq\sum_{j=i-KJ}^{i-1}\|\vW_j-\vW_{j+1} \|_{\infty}\\
    &=\sum_{j=i-KJ}^{i-1}\|\alpha \vM_j/\sqrt{\vV_j} \|_{\infty}\\
    &\leq \sum_{j=i-KJ}^{i-1}\frac{\alpha}{\nu_{\mathrm{min}}}\|\vM_j \|_{\infty}\\
     &\leq \sum_{j=i-KJ}^{i-1}\frac{\alpha}{\nu_{\mathrm{min}}}H_{\infty}\\
     &=KJ\frac{\alpha}{\nu_{\mathrm{min}}}H_{\infty}.
\end{align*}
Using this value for $Q$, we have 
$$\|\EE_{\xi}[\bm{\delta}_i] \|_{\infty}\leq RKT\frac{\alpha}{\nu_{\mathrm{min}}}H_{\infty}=C\frac{\alpha}{\nu_{\mathrm{min}}}H_{\infty} $$ 
where $C=RKJ$.

\end{proof}

\begin{lemma}\label{lem:bd-term2}
    Under Assumptions~\ref{assum:smooth}-\ref{assum:bound-grad}, $\mathrm{Term}_2$ in \eqref{eq:15} can be bounded as:
    \begin{align}
     \mathrm{Term}_2
     &\leq\frac{\alpha^2}{2\nu_{\mathrm{min}}^2}H_F^2(t-1)\bigg[\rho^2\xu{\frac{\beta_1^2}{(1-\beta_1)^2}}+1 \bigg] +\xu{2H^2_{\infty}}\EE\bigg[\sum_{i=2}^t\sum_{j=1}^d  \bigg|\bigg( \frac{\ai}{(\svi)_j}-\frac{\aip}{(\svip)_j} \bigg)\bigg| \bigg] \notag\\
     &+\alpha^2 \frac{CH_1H_{\infty}}{\nu_{\mathrm{min}}^2}(t-1) +\alpha\frac{\xu{H_1H_{\infty}}}{\nu_{\mathrm{min}}} -\EE\bigg[\sumi \ai\langle \nabla F(\vW_i) ,\nabla F(\vW_i)/\svi \rangle  \bigg].  \label{eq:24}
    \end{align}
\end{lemma}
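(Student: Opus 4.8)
The plan is to start from the definition of $\mathrm{Term}_2$ in \eqref{eq:15} and insert the exact gradient $\nabla F(\vW_i)$ into \emph{both} arguments of the inner product. Writing $\gfx=\nabla F(\vW_i)+\big(\gfx-\nabla F(\vW_i)\big)$ and $\vG_i=\nabla F(\vW_i)+\bm{\delta}_i$, with $\bm{\delta}_i=\vG_i-\nabla F(\vW_i)$ as in Lemma~\ref{lem:delta_bound}, each summand $-\ai\langle\gfx,\vG_i/\svi\rangle$ splits into three groups: a \emph{main} term $-\ai\langle\nabla F(\vW_i),\nabla F(\vW_i)/\svi\rangle$, which is exactly the negative term already appearing on the right of \eqref{eq:24}; a \emph{stochastic-error} term $-\ai\langle\nabla F(\vW_i),\bm{\delta}_i/\svi\rangle$; and a \emph{momentum-drift} term $-\ai\langle\gfx-\nabla F(\vW_i),\vG_i/\svi\rangle$. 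I would then bound the latter two groups separately, matching them to the remaining terms of \eqref{eq:24}.

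For the momentum-drift term I would first control $\|\xix-\vW_i\|$. By the definition \eqref{eq:9} of $\xix$ and the $\vW$-update, $\xix-\vW_i=\bfracConst(\vW_i-\vW_{i-1})=-\bfracConst\,\alpha\,\mip/\svip$, so Assumption~\ref{assum:low-bd} together with $\|\mip\|\le H_F$ (proved by induction exactly as in Lemma~\ref{lem:bd-term1}) gives $\|\xix-\vW_i\|\le\bfracConst\frac{\alpha}{\nu_{\mathrm{min}}}H_F$. Smoothness (Assumption~\ref{assum:smooth}) then yields $\|\gfx-\nabla F(\vW_i)\|\le\rho\,\bfracConst\frac{\alpha}{\nu_{\mathrm{min}}}H_F$. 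Keeping the outer stepsize attached to the second factor and applying Young's inequality $\langle a,\alpha b\rangle\le\frac12\|a\|^2+\frac12\|\alpha b\|^2$ with $a=\gfx-\nabla F(\vW_i)$ and $b=\vG_i/\svi$ (whose norm is at most $H_F/\nu_{\mathrm{min}}$ by Assumptions~\ref{assum:low-bd}--\ref{assum:bound-grad}) produces $\frac{\alpha^2}{2\nu_{\mathrm{min}}^2}H_F^2\big[\rho^2\frac{\beta_1^2}{(1-\beta_1)^2}+1\big]$ per iteration; since $\xix-\vW_i=\vzero$ at $i=1$, the summation runs from $i=2$ to $t$, giving the factor $(t-1)$.

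The stochastic-error term is the crux, and the obstacle is that $\bm{\delta}_i$ is \emph{not} independent of the adaptive denominator $\svi$, because $\vV_i$ itself depends on $\vG_i$; hence I cannot directly take a conditional expectation against $1/\svi$ and invoke the bias bound of Lemma~\ref{lem:delta_bound}. The key device is to replace $\svi$ by the lagged denominator $\svip$, which (like $\nabla F(\vW_i)$) is measurable with respect to the history through iteration $i-1$. For $i\ge2$ I would write $\tfrac{1}{\svi}=\tfrac{1}{\svip}+\big(\tfrac{1}{\svi}-\tfrac{1}{\svip}\big)$. Against $\tfrac{1}{\svip}$ I take $\EE_\xi$, pull out $\EE_\xi[\bm{\delta}_i]$, and bound by $\|\nabla F(\vW_i)\|_1\,\nu_{\mathrm{min}}^{-1}\|\EE_\xi[\bm{\delta}_i]\|_\infty\le H_1\nu_{\mathrm{min}}^{-1}\cdot C\frac{\alpha}{\nu_{\mathrm{min}}}H_\infty$ via Lemma~\ref{lem:delta_bound}; after the outer $\alpha$ and summation this yields $\alpha^2\frac{CH_1H_\infty}{\nu_{\mathrm{min}}^2}(t-1)$. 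Against the correction $\tfrac{1}{\svi}-\tfrac{1}{\svip}$ I bound in absolute value entrywise using $\|\nabla F(\vW_i)\|_\infty\le H_\infty$ and $\|\bm{\delta}_i\|_\infty\le2H_\infty$, obtaining $2H_\infty^2\sum_j\big|\tfrac{\alpha}{(\svi)_j}-\tfrac{\alpha}{(\svip)_j}\big|$, the bounded-variation term in \eqref{eq:24}.

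Finally, the single $i=1$ summand admits no lagged denominator, so I would bound it directly by $\alpha\|\nabla F(\vW_1)\|_1\,\nu_{\mathrm{min}}^{-1}\|\bm{\delta}_1\|_\infty$, which is of order $\alpha\frac{H_1H_\infty}{\nu_{\mathrm{min}}}$ and accounts for the last term of \eqref{eq:24}. Since the main term is precisely the negative piece of \eqref{eq:24}, collecting the four contributions yields the claimed bound. The essential point, and what makes biased gradients harmless here, is that the lagging trick isolates an $\mathcal{F}_{i-1}$-measurable part controlled by the $O(\alpha)$ bias of Lemma~\ref{lem:delta_bound} (contributing only at order $\alpha^2$) from a correction captured by the telescoping bounded-variation term that the general inequality \eqref{eq:main-result} already absorbs.
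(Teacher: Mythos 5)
Your proposal is correct and follows essentially the same route as the paper's proof: the identical three-way decomposition of $\mathrm{Term}_2$ into main, stochastic-error, and momentum-drift pieces, Young's inequality plus smoothness and $\|\vM_{i-1}\|\le H_F$ for the drift, and the same lagging trick $\tfrac{1}{\svi}=\tfrac{1}{\svip}+(\tfrac{1}{\svi}-\tfrac{1}{\svip})$ to make Lemma~\ref{lem:delta_bound} applicable via conditioning, with the $i=1$ term handled separately. No substantive differences.
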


\begin{proof}
    From \eqref{eq:9}, we have
    \begin{align}
        \widetilde{\vW}_i-\vW_i=\bfracConst(\vW_i-\vW_{i-1})=-\bfracConst\aip\mip/\svip \;\;.\label{eq:25}
    \end{align}
    From the definition of $\widetilde{\vW}_i$, we have $\widetilde{\vW}_1=\vW_1$. Therefore,
    \begin{align}
    \mathrm{Term}_2&= -\EE\bigg[\sumi \ai\langle \gfx ,\gi/\svi \rangle  \bigg] \notag\\
    &= -\EE\bigg[\sumi \ai\langle \nabla F(\vW_i) ,\gi/\svi \rangle  \bigg] -\EE\bigg[\sum_{i=2}^t \ai\langle \nabla F(\widetilde{\vW}_i)-\nabla F(\vW_i) ,\gi/\svi \rangle  \bigg] \;\;.\label{eq:26} 
    \end{align}
    Using the equation $\langle a,b \rangle \leq \frac{1}{2}(\| a\|^2+\|b\|^2)$ and the fact that the gradient of $F$ is  $\rho$-Lipschitz, the second term in the RHS of \eqref{eq:26} can be bounded as follows:
    \begin{align}
    & -\EE\bigg[\sum_{i=2}^t \ai\langle \nabla F(\widetilde{\vW}_i)-\nabla F(\vW_i) ,\gi/\svi \rangle  \bigg] \notag \\
    \leq &\, \EE\bigg[\sum_{i=2}^t  \frac{1}{2}\|\nabla F(\widetilde{\vW}_i)-\nabla F(\vW_i)\|^2+ \frac{1}{2}\|\ai\gi/\svi\|^2  \bigg] \notag\\
    \leq &\, \frac{\rho^2}{2} \EE\bigg[\sum_{i=2}^t \|\widetilde{\vW}_i-\vW_i\|^2\bigg]+ \frac{1}{2}\EE\bigg[\sum_{i=2}^t \|\ai\gi/\svi\|^2\bigg]\notag\\
    = &\, \frac{\rho^2}{2} \EE\bigg[\sum_{i=2}^t \left\lVert\bfracConst \aip\mip/\svip  \right\rVert^2\bigg] + \frac{1}{2}\EE\bigg[\sum_{i=2}^t \|\ai\gi/\svi\|^2\bigg] \label{eq:27}
    \end{align}

To bound the first term of \eqref{eq:27}, we use the bounds: $\|\vM_t\| \le H_F$ from \eqref{eq:bd-mt-F} and $(\vV_i)_j \ge \nu_{\mathrm{min}}^2 \; \forall j$ to have
\begin{align}
    \frac{\rho^2}{2} \EE\bigg[\sum_{i=2}^t \left\lVert\bfracConst \aip\mip/\svip  \right\rVert^2\bigg]& = 
    \frac{\rho^2\alpha^2}{2}\xu{\frac{\beta_1^2}{(1-\beta_1)^2}}\EE\bigg[\sum_{i=2}^t\left\lVert\mip/\svip  \right\rVert^2 \bigg]\notag \\
    &\leq \frac{\rho^2\alpha^2}{2\nu_{\mathrm{min}}^2}\xu{\frac{\beta_1^2}{(1-\beta_1)^2}} H_F^2(t-1)
\end{align}
Similarly, we can bound the second term of \eqref{eq:27} by 
\begin{align}
    \frac{1}{2}\EE\bigg[\sum_{i=2}^t \|\ai\gi/\svi\|^2\bigg]\leq \frac{\alpha^2}{2\nu_{\mathrm{min}}^2}H_F^2(t-1).
\end{align}
Putting these results together, the second term of \eqref{eq:26} can be bounded as follows:
 \begin{align}
     -\EE\bigg[\sum_{i=2}^t \ai\langle \nabla F(\widetilde{\vW}_i)-\nabla F(\vW_i) ,\gi/\svi \rangle  \bigg]&\leq \frac{\rho^2\alpha^2}{2\nu_{\mathrm{min}}^2}\frac{\beta_1^2}{(1-\beta_1)^2}H_F^2(t-1)+\frac{\alpha^2}{2\nu_{\mathrm{min}}^2}H_F^2(t-1) \notag \\
     &=\frac{\alpha^2}{2\nu_{\mathrm{min}}^2}H_F^2(t-1)\bigg[\rho^2\xu{\frac{\beta_1^2}{(1-\beta_1)^2}}+1 \bigg] \label{eq:t1}
 \end{align}

Now, to bound 
the first term in \eqref{eq:26}, let $$\bm{\delta}_t:=\gt-\nabla F(\vW_t),$$
and thus 
\begin{align}
     \EE\bigg[\sumi \ai\langle \nabla F(\vW_i) ,\gi/\svi \rangle  \bigg]
     = & \EE\bigg[\sumi \ai\langle \nabla F(\vW_i) ,\nabla F(\vW_i)/\svi \rangle  \bigg] \notag \\
     &+\EE\bigg[\sumi \ai\langle \nabla F(\vW_i) ,\bm{\delta}_i/\svi \rangle  \bigg]. \label{eq:34}
\end{align}

The first term in the RHS of \eqref{eq:34} is nonegative and is the descent quantity to be bounded in the convergence proof. To bound the second term in the RHS of \eqref{eq:34}, we rewrite it as follows:
\begin{align}
    \EE\bigg[\sumi \ai\langle \nabla F(\vW_i) ,\bm{\delta}_i/\svi \rangle  \bigg]&=\EE\bigg[\sum_{i=2}^t \bigg\langle \nabla F(\vW_i) ,\bm{\delta}_i \odot\bigg( \frac{\ai}{\svi}-\frac{\aip}{\svip} \bigg) \bigg\rangle  \bigg] \label{eq:delta1} \\
    &+\EE\bigg[\sum_{i=2}^t \bigg\langle \nabla F(\vW_i) ,\bm{\delta}_i \odot\frac{\alpha}{\svip} \bigg\rangle  \bigg] \label{eq:delta2} \\
    &+\EE\bigg[ \alpha\langle \nabla F(\vW_1) ,\bm{\delta}_1/\sqrt{\vV_1} \rangle  \bigg]. \label{eq:delta3}
\end{align}

Below we upper bound the three terms in the RHS of the above equation. 
Since $\|\vG_t \|\leq H_F$ and $\|\nabla F(\vW_t) \|\leq H_F$ from Assumption~\ref{assum:bound-grad}, we have $\|\bm{\delta}_t \|\leq 2H_F$. Similarly, $\|\bm{\delta}\|_{\infty}\leq 2H_{\infty}$.
Hence, we bound \eqref{eq:delta1} as follows: 
\begin{align*}
 \EE\bigg[\sum_{i=2}^t \bigg\langle \nabla F(\vW_i) ,\bm{\delta}_i \odot\bigg( \frac{\ai}{\svi}-\frac{\aip}{\svip} \bigg) \bigg\rangle  \bigg]
 &\geq -\xu{2H^2_{\infty}}\EE\bigg[\sum_{i=2}^t\sum_{j=1}^d  \bigg|\bigg( \frac{\ai}{(\svi)_j}-\frac{\aip}{(\svip)_j} \bigg)\bigg| \bigg]. 
\end{align*}
To bound \eqref{eq:delta2}, we use the result from Lemma~\ref{lem:delta_bound} to bound $\|\EE_{\xi_i}[\bm{\delta}_i]\|$: 

\begin{align*}
\EE\bigg[\sum_{i=2}^t \aip\bigg\langle \nabla F(\vW_i) ,\bm{\delta}_i \odot\frac{1}{\svip} \bigg\rangle  \bigg]&=\EE\bigg[\sum_{i=2}^t \aip\bigg\langle \nabla F(\vW_i) ,\EE_{\xi_i}[\bm{\delta}_i] \odot\frac{1}{\svip} \bigg\rangle \bigg|\xi_1,\ldots,\xi_{i-1} \bigg]\\
&\geq -\EE\bigg[\sum_{i=2}^t \frac{\aip}{\nu_{\mathrm{min}}}\|\nabla F(\vW_i) \|_1 \| \EE_{\xi_i}[\bm{\delta}_i]\|_{\infty} \bigg|\xi_1,\ldots,\xi_{i-1}  \bigg]\\
&\geq -\EE\bigg[\sum_{i=2}^t \frac{\aip}{\nu_{\mathrm{min}}} H_1 \frac{C\alpha}{\nu_{\mathrm{min}}} H_{\infty} \bigg]\\
&= -\alpha^2 \frac{CH_1H_{\infty}}{\nu_{\mathrm{min}}^2}(t-1).
\end{align*}
To bound \eqref{eq:delta3}, we have:
\begin{align*}
\EE\bigg[ \alpha\langle \nabla F(\vW_1) ,\bm{\delta}_1/\sqrt{\vV_1} \rangle  \bigg]&\geq -\EE\bigg[ \frac{\alpha}{\nu_{\mathrm{min}}}\| \nabla F(\vW_1)\|_1 \|\bm{\delta}_1\|_{\infty}  \bigg] \geq -\alpha\frac{\xu{H_1H_{\infty}}}{\nu_{\mathrm{min}}}.
\end{align*}
Putting these together, we can then bound the second term in the RHS of \eqref{eq:34} by
\begin{align}
     \EE\bigg[\sumi \ai\langle \nabla F(\vW_i) ,\bm{\delta}_i/\svi \rangle  \bigg] &\geq -\xu{2H^2_{\infty}}\EE\bigg[\sum_{i=2}^t\sum_{j=1}^d  \bigg|\bigg( \frac{\ai}{(\svi)_j}-\frac{\aip}{(\svip)_j} \bigg)\bigg| \bigg] \notag\\
     &-\alpha^2 \frac{CH_1H_{\infty}}{\nu_{\mathrm{min}}^2}(t-1) -\alpha\frac{\xu{H_1H_{\infty}}}{\nu_{\mathrm{min}}}.\label{eq:36}
\end{align}

Then we can bound the first term in the RHS of \eqref{eq:26} by substituting \eqref{eq:36} into \eqref{eq:34}
\begin{align}
    -\EE\bigg[\sumi \ai\langle \nabla F(\vW_i) ,\gi/\svi \rangle  \bigg]
     &\leq  \xu{2H^2_{\infty}}\EE\bigg[\sum_{i=2}^t\sum_{j=1}^d  \bigg|\bigg( \frac{\ai}{(\svi)_j}-\frac{\aip}{(\svip)_j} \bigg)\bigg| \bigg] \notag\\
     &+\alpha^2 \frac{CH_1H_{\infty}}{\nu_{\mathrm{min}}^2}(t-1) +\alpha\frac{\xu{H_1H_{\infty}}}{\nu_{\mathrm{min}}} \notag\\ 
     &-\EE\bigg[\sumi \ai\langle \nabla F(\vW_i) ,\nabla F(\vW_i)/\svi \rangle  \bigg]. \label{eq:37}
\end{align}
Combining equations \eqref{eq:37} and \eqref{eq:t1} to gives the inequality in \eqref{eq:24}.
\end{proof}

Now we are ready to prove Theorem~\ref{thm:main_result}.
\begin{proof}[\textbf{Proof Of Theorem~\ref{thm:main_result}}]
Starting from the result in Lemma~\ref{lem:bound-obj-chg} and bounding $\mathrm{Term}_1, \mathrm{Term}_2, \mathrm{Term}_3, \mathrm{Term}_4$ by using the results from Lemmas~\ref{lem:bd-term1}--\ref{lem:bd-term2}, we have:

\begin{align*}
   &\, \EE[F(\xtn)-F(\widetilde{\vW}_1)]
   \\
    \leq &\, H_{\infty}^2 \frac{\beta_1}{1-\beta_1} \EE \bigg[\sum_{i=2}^t \sum_{j=1}^d \bigg|\bigg(\frac{\ai}{\svi}-\frac{\aip}{\svip}\bigg)_j\bigg|\bigg] \\ 
    &+\rho\bigg(\frac{\beta_1}{1-\beta_1} \bigg)^2H_{\infty}^2\EE \bigg[\sum_{i=2}^t\sum_{j=1}^d \bigg(\frac{\ai}{\svi}-\frac{\aip}{\svip} \bigg)_j^2   \bigg] 
    +\frac{\rho\alpha^2}{\nu_{\mathrm{min}}^2}H_F^2t \\
    &+\frac{\alpha^2}{2\nu_{\mathrm{min}}^2}H_F^2(t-1)\bigg[\rho^2\frac{\beta_1^2}{(1-\beta_1)^2}+1 \bigg] \notag +2H^2_{\infty}\EE\bigg[\sum_{i=2}^t\sum_{j=1}^d  \bigg|\bigg( \frac{\ai}{(\svi)_j}-\frac{\aip}{(\svip)_j} \bigg)\bigg| \bigg] \notag\\
     &+\alpha^2 \frac{CH_1H_{\infty}}{\nu_{\mathrm{min}}^2}(t-1) +\alpha\frac{H_1H_{\infty}}{\nu_{\mathrm{min}}} \notag -\EE\bigg[\sumi \ai\langle \nabla F(\vW_i) ,\nabla F(\vW_i)/\svi \rangle  \bigg].
\end{align*}
Rearranging terms in the above inequality gives 
\begin{align*}
&\, \EE\bigg[\sumi \ai\bigg\langle \nabla F(\vW_i) ,\frac{\nabla F(\vW_i)}{\svi} \bigg\rangle  \bigg]\\
\leq &\,
 (H_{\infty}^2\frac{\beta_1}{1-\beta_1}+2H_{\infty}^2) \EE \bigg[\sum_{i=2}^t \sum_{j=1}^d \bigg|\bigg(\frac{\ai}{\svi}-\frac{\aip}{\svip}\bigg)_j\bigg|\bigg]\\ 
&+\rho\bigg(\frac{\beta_1}{1-\beta_1} \bigg)^2H_{\infty}^2\EE \bigg[\sum_{i=2}^t\sum_{j=1}^d \bigg(\frac{\ai}{\svi}-\frac{\aip}{\svip} \bigg)_j^2   \bigg] 
+\bigg[ \rho\frac{\alpha^2}{\nu_{\mathrm{min}}^2}H_F^2 \bigg]t\\
&+\bigg[\frac{\alpha^2}{2\nu_{\mathrm{min}}^2}H_F^2\bigg(\rho^2\frac{\beta_1^2}{(1-\beta_1)^2}+1 \bigg)+\frac{\alpha^2}{\nu_{\mathrm{min}}^2}CH_1H_{\infty}  \bigg](t-1)\\
&+\alpha \frac{H_1H_{\infty}}{\nu_{\mathrm{min}}} + \EE[F(\widetilde{\vW}_1)-F(\xtn)].
\end{align*}
By $F(\xtn) \ge F^*$, $\widetilde{\vW}_1={\vW}_1$, and the definitions of the constants $C_1,C_2,C_3$ and $C_4$ in \eqref{eq:C1}-\eqref{eq:C4}, we obtain the desired result and complete the proof.
\end{proof}

\section{Hyperparameter Tuning Numerical Results}\label{appendixB1}

\begin{table}[h]
\caption{Hyperparameters that generate highest test accuracy for each optimizer on Cora}\label{tab:cora}%
\begin{tabular}{@{}llll@{}}
        \toprule
        Optimizer &Learning Rate &Sampled Neighbors & Batch Size
       \\
        \midrule
        Adam& .01 &5  &20    \\
        Heavy-Ball SGD&.05  &2  &50    \\
        AMSGrad&.01  &2  &10    \\
        AdaGrad &.01  &5  &10    \\
        SGD&.05  &2  &20    \\
        
        \bottomrule
    \end{tabular}
\end{table}

\begin{table}[h]
\caption{Hyperparameters that generate highest test accuracy for each optimizer on CiteSeer}\label{tab:citeseer}%
\begin{tabular}{@{}llll@{}}
        \toprule
        Optimizer &Learning Rate &Sampled Neighbors & Batch Size
       \\
        \midrule
        Adam&.01  &2  &10    \\
        Heavy-Ball SGD&.01  &2  &20    \\
        AMSGrad&.01  &5  &20    \\
        AdaGrad &.01  &5  &10    \\
        SGD&.05  &2  &10    \\
        
        \bottomrule
    \end{tabular}
\end{table}

\begin{table}[h]
\caption{Hyperparameters that generate highest test accuracy for each optimizer on ogbn-arxiv}\label{tab:ogbn-arxiv}%
\begin{tabular}{@{}llll@{}}
        \toprule
        Optimizer &Learning Rate &Sampled Neighbors & Batch Size
       \\
        \midrule
        Adam&.005  &2 &1000    \\
        Heavy-Ball SGD& .01 &2  &1000    \\
        AMSGrad&.005  &2  &1000    \\
        AdaGrad &.01  &2  &1000    \\
        SGD&.01  &2  &1000    \\
        
        \bottomrule
    \end{tabular}
\end{table}

\begin{table}[h]
\caption{Hyperparameters that generate highest test accuracy for each optimizer on Flickr}\label{tab:flickr}%
\begin{tabular}{@{}llll@{}}
        \toprule
        Optimizer &Learning Rate &Sampled Neighbors & Batch Size
       \\
        \midrule
        Adam& .1 &5  &5000    \\
        Heavy-Ball SGD&.1  &5  &1000    \\
        AMSGrad&.1  & 5 & 1000   \\
        AdaGrad & .1 & 5 &1000    \\
        SGD&.5  & 5 & 1000   \\
        
        \bottomrule
    \end{tabular}
\end{table}

\begin{table}[h]
\caption{Hyperparameters that generate highest test accuracy for each optimizer on Reddit}\label{tab:reddit}%
\begin{tabular}{@{}llll@{}}
        \toprule
        Optimizer &Learning Rate &Sampled Neighbors & Batch Size
       \\
        \midrule
        Adam& .01 &2  &1000    \\
        Heavy-Ball SGD&.05  & 2 & 1000   \\
        AMSGrad&.01  & 2 & 1000   \\
        AdaGrad & .05 & 2 & 1000   \\
        SGD& .05 & 2 &1000    \\
        
        \bottomrule
    \end{tabular}
\end{table}

\end{appendices}

\clearpage
\bibliography{sn-bibliography}

\end{document}